\newtheorem{theorem}{Theorem}
\newtheorem{proposition}[theorem]{Proposition}
\newtheorem{lemma}[theorem]{Lemma}
\newtheorem{corollary}[theorem]{Corollary}
\theoremstyle{definition}
\newtheorem{problem}[theorem]{Problem}
\newtheorem*{notation}{Notation}
\theoremstyle{remark}
\newtheorem{remark}[theorem]{Remark}
\newcommand{\f}[1]{\mathbb{F}_{#1}}
\newcommand{\K}{\mathbb{K}}
\newcommand{\F}{\mathbb{F}}
\newcommand{\tracen}[1]{\mathsf{tr}^n_1(#1)}
\newcommand{\tracem}[1]{\mathsf{tr}^m_1(#1)}
\newcommand{\tracend}[2]{\mathsf{tr}^n_{#1}(#2)}
\newcommand{\tracex}[3]{\mathsf{tr}^{#1}_{#2}({#3})}
\newcommand{\tra}[1]{\mathsf{Tr}({#1})}
\newcommand{\Gcd}[2]{\mathsf{gcd}({#1},{#2})}
\newcommand{\Ker}[1]{\mathsf{Ker}({#1})}
\newcommand{\Dim}[1]{\mathsf{dim}\ {#1}}
\newcommand{\Image}[1]{\mathsf{Im}({#1})}
\newcommand{\Ind}[1]{\mathsf{ind}_2({#1})}
\begin{document}


\title{APN trinomials and hexanomials}

\author{Faruk G\"{o}lo\u{g}lu}
\email{farukgologlu@gmail.com}

\date{}

\begin{abstract}
In this paper we give a new family of APN trinomials of the form $X^{2^k+1} + (\tracex{n}{m}{X})^{2^k+1}$ on $\f{2^n}$ where $\Gcd{k}{n}=1$ and $n = 2m = 4t$, and prove its important properties. The family satisfies for all $n = 4t$ an interesting property of the Kim function which is, up to equivalence, the only known APN function equivalent to a permutation on $\f{2^{2m}}$. As another contribution of the paper, we consider a family of hexanomials $g_{C,k}$ which was shown to be differentially $2^{\Gcd{m}{k}}$-uniform by Budaghyan and Carlet (2008) when a quadrinomial $P_{C,k}$ has no roots in a specific subgroup. In this paper, for all $(m,k)$ pairs, we characterize, construct and count all $C \in \f{2^n}$ satisfying the condition. Bracken, Tan and Tan (2014) and Qu, Tan and Li (2014) constructed some elements $C$ satisfying the condition when $m \equiv 2 \textrm{ or } 4 \pmod{6}$ and $m \equiv 0 \pmod{6}$ respectively, both requiring $\Gcd{m}{k} = 1$. Bluher (2013) proved that such $C$ exists if and only if $k \ne m$ without characterizing, constructing or counting those $C$. To prove the results, we effectively use a Trace-$0$/Trace-$1$ (relative to the subfield $\f{2^m}$) decomposition of $\f{2^n}$.
\end{abstract}


\maketitle
\section{Introduction}

A function $f : \f{2^n} =\F \to \F$ is called {\em almost perfect nonlinear (APN)} if the nonzero derivatives $D_A f(X) = f(X)+f(X+A)+f(A)$ are two-to-one maps of $\F$ for $A \in \F^*$. For a long time, the only known APN functions were affinely equivalent to the monomials of Table \ref{APNmono} (or their inverses when they exist or the monomials in the same cyclotomic classes). If $n$ is odd, APN monomials are permutations and if $n = 2m$ is even, they are three-to-one functions. CCZ-equivalence, introduced in \cite{CCZ}, provides the best notion of equivalence regarding differential and linear properties. If $f$ and $f'$ are CCZ-equivalent then $f$ is APN if and only if $f'$ is APN. Many infinite families have been found (see Table \ref{APNmulti}) which are CCZ-inequivalent to monomials. The existence of APN permutations on even $n$ was a big problem, named ``the big APN problem'' by Dillon in \cite{Dillon1}. In the conference $\F_{q^9}$ in 2009, John Dillon presented the first almost perfect nonlinear permutation on even $n = 6$ \cite{Dillon2}. The permutation is CCZ-equivalent to the {\em Kim function} $\kappa(X) = X^3 + X^{10} + A X^{24}$, where $A$ is a generator of $\f{2^6}^*$. As $\kappa$ demonstrates, CCZ equivalence does not preserve being a permutation. Finding an APN permutation on even extension $n > 6$ is regarded as (still) the most important problem of the APN-related research \cite{Dillon2}.

If one wants to find APN permutations on larger even dimensions, a plausible method is to mimic the behaviour of the Kim function $\kappa$, that is, finding ---possibly infinite families of--- APN functions satisfying properties which $\kappa$ satisfies. A property of this kind was stated explicitly in \cite{Dillon2}, which is the existence of some $k$ such that
\[
\kappa(\lambda X) = \lambda^{2^k+1}\kappa(X), \hspace{1.0cm}  \forall \lambda \in \f{2^{m}}
\]
and was called the {\bf subspace property}, and according to Browning {\em et al.} ``explained some of the simplicity'' of the double simplex decomposition of the code corresponding to $\kappa$, i.e., equivalence of $\kappa$ to a permutation. In this paper, we will prove that the function
\[
f_k(X) = X^{2^k+1} + (\tracex{n}{m}{X})^{2^k+1}
\]
is APN if and only if $n = 4t$ and $\Gcd{n}{k} = 1$. This family is, other than the monomials, the first infinite family of APN functions provably satisfying the subspace property. In fact, we are not aware of any APN function (belonging to an infinite family or sporadic) satisfying the subspace property other than the monomials, the Kim function, and the family $f_k$ introduced here. Unfortunately, $f_k$ are not equivalent to permutations on $n = 4,8$ and does not seem to be equivalent to one on $n=12$ (we say ``it does not seem to be equivalent to a permutation'' since checking the existence of CCZ-equivalent permutations requires huge amount of computing and is infeasible on $n=12$; our program was still running at the time of writing). On the other hand, no member of our family is equivalent to a known APN function on $\f{2^{12}}$ and they are inequivalent for different $k$ to each other (considering $1\le k < m$) for $n \le 12$. 

Being {\em crooked} (see the definition in Section \ref{sec_pre}), the derivatives of $f_k$ are hyperplanes. We give the hyperplanes of $\F$ to which the image set of $D_A f_k(X)$ corresponds. Using the {\em hyperplane spectrum} we are able to give the Walsh spectrum of $f_k$ as well as the elements $A$ such that $\tracen{A f_k(X)}$ are bent. Finding the Walsh spectrum of non-monomials is regarded as an interesting problem \cite[Problem 9.2.61]{HFF}. The properties of the hyperplane spectrum imply the Walsh spectrum result and makes it possible to list all the embedded bent functions explicitly.

Budaghyan, Carlet and Leander \cite{B5} found the simply described and beautiful family of APN functions
\[
X^3 + \tracex{n}{1}{X^9}
\]
for any $n$. A technique called ``switching'' was introduced in \cite{EP} which produced new (sporadic) non-quadratic APN functions by adding Boolean functions to known APN functions. Finding infinite families similar to the BCL family is regarded as an interesting research problem. Our new family provides such an example. As a difference, the new family can be seen as adding a {\em vectorial} (i.e., not a single output) Boolean function to the existing Gold family of monomials. Another property of our family, similar to the BCL family, which we believe worth mentioning is that its coefficients are from the simplest possible field, $\f{2}$. 

Another contribution of the paper is related to another family of APN hexanomials $g_{C,k}$ introduced by Budaghyan and Carlet \cite{BC}, and polynomials $P_{C,k}$ of the form $X^{2^k+1} + CX^{2^k}+C^{2^m}X+1$. The polynomials of the form $X^{2^k+1} + SX^{2^k}+TX+U$ have been studied extensively \cite{Abhyankar,Bluher1,HK1,HK2} and appear in many problems related to finite fields: difference sets \cite{DillonGeo,DD}, cross-correlation of $m$-sequences \cite{DFHR,HKN}, error-correcting-codes \cite{BraHel} and quite recently, the discrete logarithm problem on finite fields \cite{GGMZa,GGMZb}. Budaghyan and Carlet \cite{BC} showed that $g_{C,k}$ are differentially $2^{\Gcd{m}{k}}$-uniform (APN if $\Gcd{m}{k}=1$), provided that $P_{C,k}$ has no roots in the cyclic subgroup $\mathcal{P}_{2^m-1}$ of $\f{2^n}^*$ of order $2^{m}+1$. Bracken, Tan and Tan constructed \cite{BTT} some elements $C$ when $m \equiv 2 \textrm{ or } 4 \pmod{6}$  such that $P_{C_k}$ has no roots in $\mathcal{P}_{2^m-1}$ (in the $\Gcd{m}{k}=1$ case). Later, Qu, Tan and Li \cite{Qu} constructed some elements when $m \equiv 0 \pmod{6}$. Bluher characterized those $(m,k)$ pairs for which such a $P_{C,k}$ exists for any $\Gcd{m}{k}$ without giving a characterization, construction or counting method for $C$. In this paper, we completely characterize all $(C,m,k)$ triples, that is, for any $(m,k)$ pair, we describe all elements $C$ such that $P_{C,k}$ has no roots in $\mathcal{P}_{2^m-1}$ with no $\Gcd{m}{k}$ restriction. We are also able to count all such $C$ for any $(m,k)$ pair and construct them efficiently.

For all contributions of the paper we use a method based on a {\bf Trace-$0$/Trace-$1$} (relative to the subfield $\f{2^m}$) decomposition. We switch between the Trace-$0$/Trace-$1$ and the better-known {\bf polar-coordinate} (subfield $\f{2^m}$-sub\-group $\mathcal{P}_{2^m-1}$) decompositions by the two maps $\phi,\psi$ we introduce in the next section. The polar-coordinate decomposition has been widely used unlike the Trace-$0$/Trace-$1$ representation. Our paper can be seen as an introduction to a basic method which does not require complicated character sums and is effective especially when dealing with quadratic exponents $2^k+1$ and $2^m(2^k+1)$ together. We first give the basics of our method in Section \ref{sec_pre}. Then in Section \ref{sec_apn}, we will prove almost perfect nonlinearity and other properties of the new APN family. In Section \ref{sec_hex}, we will prove our results related to BC hexanomials.

\begin{table}[!h]
\noindent\begin{center} 
{\footnotesize
\begin{tabular}{|c|c|c|c|c|} 
\hline 
 \textbf{Family} & \textbf{Monomial} & \textbf{Conditions} & \textbf{Proved in}\\ 
\hline 
{Gold} & \footnotesize{$X^{2^i+1}$} & \footnotesize{$\Gcd{i}{n}=1$} & \footnotesize{\cite{Gold}}\\ 
\hline 
{Kasami} & \footnotesize{$X^{2^{2i}-2^i+1}$} & \footnotesize{$\Gcd{i}{n}=1$}  & \footnotesize{\cite{kasami-71}}\\ 
\hline 
{Welch}  & \footnotesize{$X^{2^t +3}$ }&  \footnotesize{$n=2t+1$}  &\footnotesize{\cite{Dobb1}}\\ 
\hline 
{Niho}  &\footnotesize{$X^{2^t+2^\frac{t}{2}-1}$, $t$ even} & \footnotesize{$n=2t+1$} & \footnotesize{\cite{Dobb2}}\\ 
 & \footnotesize{$X^{2^t+2^\frac{3t+1}{2}-1}$, $t$ odd} &  & \\ 
\hline 
{Inverse} &\footnotesize{$X^{2^{2t}-1}$}& \footnotesize{$n=2t+1$}& \footnotesize{\cite{Nyb94}}\\ 
\hline 
{Dobbertin}  & \footnotesize{$X^{2^{4t}+2^{3t}+2^{2t}+2^{t}-1}$} & \footnotesize{$n=5t$} & \footnotesize{\cite{dobbertin-power5}}\\ 
\hline 
\end{tabular} 
}
\end{center}
\caption{Known infinite families of APN monomials on $\f{2^n}$} \label{APNmono}
\end{table}

\begin{table}[!h]
\noindent\begin{center} 
{\footnotesize
\begin{tabular}{|c|m{4.0cm}|m{4.0cm}|c|} 
\hline 
 \textbf{\#} & \textbf{Polynomial} & \textbf{Conditions} & \textbf{Proved in}\\ 
\hline 
 B.1 
 &
 $X^{2^s+1}+ A^{2^t-1} X^{2^{it}+2^{rt+s}}$ 
 & $n =3t$, $\Gcd{t}{3}=\Gcd{s}{3t}=1$, $t \geq 3$, $i \equiv st \pmod{3}$, $r= 3-i$, $A \in \F$ is primitive 
 & \cite{B1} \\
\hline 
 B.2 
 &
 $X^{2^s+1}+ A^{2^t-1} X^{2^{it}+2^{rt+s}}$
 & $n =4t$, $\Gcd{t}{2}=\Gcd{s}{2t}=1$, $t \geq 3$, $i \equiv st \pmod{4}$, $r= 4-i$, $A \in \F$ is primitive 
 & \cite{B2} \\
\hline 
 B.3 
 &
	$A X^{2^{s}+1}+A^{2^m}X^{2^{m+s}+2^m}+B X^{2^m+1}+\sum_{i=1}^{m-1} c_i X^{2^{m+i}+2^i}$
	& $n = 2m$, $m$ odd, $c_i \in \f{2^m}$, $\Gcd{s}{m} = 1$, $s$ is odd, $A,B \in \F$ primitive 
	& \cite{B3} \\
\hline 
 B.4 
 &
 $AX^{2^{n-t}+2^{t+s}}+A^{2^{t}}X^{2^{s}+1}+b X^{2^{t+s}+2^{s}}$
 & $n = 3t$, $\Gcd{s}{3t}=1$, $\Gcd{3}{t}=1$, $3|(t+s)$, $A \in \F$ primitive, $b \in \f{2^t}$
 & \cite{B3} \\
\hline
 B.5 
 &
 $ A^{2^t}X^{2^{n-t}+2^{t+s}}+A X^{2^{s}+1}+b X^{2^{n-t}+1}$
 & $n=3t$, $\Gcd{s}{3t}=\Gcd{3}{t}=1$, $3|(t+s)$, $A \in \F$ primitive, $b \in \f{2^t}$ 
 & \cite{B4}\\
\hline 
 B.6 
 &
 $ A^{2^t}X^{2^{n-t}+2^{t+s}}+A X^{2^{s}+1}+b X^{2^{n-t}+1}+c A^{{2^t}+1}X^{2^{t+s}+2^s}$
 & $n=3t$, $\Gcd{s}{3t}=\Gcd{3}{t}=1$, $3 |(t+s)$, $A \in \F$ primitive, $b,c\in \f{2^t}$, $bc \ne 1$
 & \cite{B4}\\
\hline
 B.7 
 &
 $X^{2^{2k}+2^k} + B X^{q+1} + C X^{q(2^{2k}+2^k)}$
 & $n=2m$, $m$ odd, $C$ is a $(q-1)$st power but not a $(q-1)(2^i+1)$st power, $CB^q+B \ne 0$
 & \cite{BC}\\
\hline 
 B.8 
 &
 $X (X^{2^k} + X^q + C X^{2^k q}) + X^{2^k}(C^q X^q + A X^{2^k q}) + X^{(2^k+1)q}$ 
 & $n=2m$, $\Gcd{n}{k} = 1$, $C$ satisfies Theorem \ref{thm_hex}, $A \in \F \setminus \f{2^m}$
 & \cite{BC} \\
\hline 
 B.9 
 &
 $X^3 + \tracex{n}{1}{X^9}$ 
 & 
 & \cite{B5}\\
\hline 
 B.10 
 &
 $X^{2^k+1} + \tracex{n}{m}{X}^{2^k+1}$ 
 & $n = 2m = 4t$, $\Gcd{n}{k} = 1$
 & here\\
\hline 
 B.11
 &
 Bivariate construction Theorem 1 of \cite{CarletBV}
 & $n = 2m$ 
 & \cite{CarletBV} \\
\hline
 B.12
 &
 Bivariate construction Theorem 9 of \cite{Zhou}
 & $n = 4m$ 
 & \cite{Zhou} \\
\hline

\end{tabular} 
}
\end{center}
\caption{Known infinite families of APN multinomials on $\f{2^n}$}\label{APNmulti}
\end{table}

\section{Preliminaries}\label{sec_pre}

\begin{notation} 
Throughout the paper the following notation is used.
\[
\begin{array}{ll}
q = 2^m & \\
n = 2m & \\ 
\K = \f{q}, &  x,y,a,b \in \K \\
\F = \f{q^2}, &  X,Y,A,B \in \F \\
& \\
\tra{X} = X + X^q & \\
\tracex{e}{d}{X} = \sum_{i=0}^{\frac{e}{d}-1} X^{2^{id}} & \\
H_\beta = \{ X \in \F \ : \ \tracen{\beta X} = 0 \} & \\
& \\
\mathcal{T}_1 = \{X \in \f{q^2} \ : \ \tra{X} = 1 \}, & g,h \in \mathcal{T}_1 \\
\mathcal{P}_{q-1} = \{X^{q-1} \ : \ X \in \F^*\}, & u,v \in \mathcal{P}_{q-1} \\
\phi : \F \to \mathcal{P}_{q-1}, & \phi : X \mapsto X^{q-1} \\
\psi : \F \setminus \K \to \mathcal{T}_1, & \psi : X \mapsto \frac{X}{\tra{X}}\\
& \\
\Gamma_k : \K \to \K, & \Gamma_k : x \mapsto x^{2^k+1} + x
\end{array}
\]
\end{notation}

\subsection{Definitions and basics}

In this paper we will use a {\bf Trace-$0$/Trace-$1$} decomposition of $\F^*$. Any $X \in \F^*$ can be written uniquely as $X = x g$ where $x \in \K^*$ and $g \in \mathcal{T}_1 \cup \{1\}$. If $xg = yh$ then $\tra{xg} = \tra{yh} = 0$ implies $g=h=1$ and therefore $x=y$. If $\tra{xg} = \tra{yh} \ne 0$, then $\tra{xg} = \tra{yh} = x = y$ and therefore $h=g$. There is another decomposition of $\F^*$ which is well-known and usually called the {\bf polar-coordinate} decomposition. Any $X \in \F^*$ can be written as $X = x u$ where $x \in \K^*$ and $u \in \mathcal{P}_{q-1}$. If $xu = yv$ then $(xu)^{q-1} = (yv)^{q-1}$ means $u^2=v^2$ and therefore $x = y$.

For $g \in \mathcal{T}_1$, we have $g^q = g+1$. For any fixed $g \in \mathcal{T}_1$, we can write any $h \in \mathcal{T}_1$ as $h=g+a$ for a unique $a \in \K$. Similarly, for any fixed $g \in \mathcal{T}_1$, any $X \in \F$ can be written as $X = a g + b$ where $a,b \in \K$.

The map $\phi$ maps $\mathcal{T}_1$ to $\mathcal{P}_{q-1} \setminus \{ 1 \}$ bijectively. Indeed $g^{q-1} = h^{q-1}$ implies $\frac{g^q}{g} = \frac{h^q}{h}$ which implies $\frac{h+1}{h}=\frac{g+1}{g}$ and $g=h$. Similarly, $\psi$ maps $\mathcal{P}_{q-1} \setminus \{ 1 \}$ onto $\mathcal{T}_1$, since $\frac{u}{\tra{u}} = \frac{v}{\tra{v}}$, taking $(q-1)$st powers, gives $u^2=v^2$. When considering the maps between $\mathcal{T}_1$ and $\mathcal{P}_{q-1} \setminus \{ 1 \}$ we will use the inverse maps $\phi^{-1}$ and $\psi^{-1}$. Note that 
\[
\begin{array}{ll}
\phi^{-1}: \mathcal{P}_{q-1} \setminus \{1\} \to \mathcal{T}_1, & \phi^{-1}: u \mapsto (\frac{u}{\tra{u}})^{\frac{q}{2}},\\
\psi^{-1}: \mathcal{T}_1 \to \mathcal{P}_{q-1} \setminus \{1\}, & \psi^{-1}: g \mapsto g^{\frac{(q-1)q}{2}}.
\end{array}
\]

A function $\F \to \F$ is called a vectorial Boolean function. A cryptographically important measure for vectorial Boolean functions is the {\em differential uniformity}. The differential uniformity of $f$ is defined as
\[
\delta_f = \max_{ A \ne 0,B \in \F} \# \left\{ X \in \F \ : \ f(X) + f(X+A) = B \right\}.
\]
Optimal functions with respect to differential uniformity in characteristic $2$ are called {\em almost perfect nonlinear (APN)}, and satisfy $\delta_f = 2$. 
This means, if $\delta_f = 2$ then the (normalized) derivatives of $f$
\[
D_A f (X) = f(X) + f(X+A) + f(A) + f(0), \ A \in \F^* 
\]
are two-to-one maps. The {\em crooked functions} are vectorial Boolean functions whose (normalized) derivatives are two-to-one maps, image of which are hyperplanes. Therefore, the crooked functions are APN. All quadratic APN functions are crooked since their derivatives are linearized polynomials. The existence of a non-quadratic crooked function is an interesting open problem. Such a function cannot be a monomial \cite{Kyu1} or a binomial \cite{Kyu2} if exists at all. For a crooked function $f$, the hyperplane spectrum $\mathcal{H}_f$ is defined by the multiset  
\[
\mathcal{H}_f = \left\{* \ \beta \in \F^* \ : \   \Image{D_A f} = H_\beta \ *\right\}.
\]
For instance for the Gold monomials $f_{\textsf{Gold}}(X) = X^{2^k+1}$ on $\f{2^n}$ with $\Gcd{n}{k}=1$, it is common knowledge and trivial to show that the image sets of the derivatives $D_A f_{\textsf{Gold}}(X) = X^{2^k+1} + (X+A)^{2^k+1} + A^{2^k+1}$ are of the form
\[
\left\{ A^{2^k} X + A X^{2^k} \ : \ X \in \F \right\} = \left\{ A^{2^k+1} (X + X^{2^k}) \ : \ X \in \F \right\}
\]
and therefore $\mathcal{H}_{f_{\textsf{Gold}}} = \{* \ \beta^{2^k+1} \ : \ \beta \in \F^* \ *\}$, which is equivalent to the set of nonzero cubes each with multiplicity three if $n$ is even and the set of all nonzero elements each with multiplicity one if $n$ is odd. It is also well known that \cite[Lemma 5]{CanChaDec} when $n$ is odd, the image sets of the derivatives of a crooked function correspond to all hyperplanes with multiplicity one.

The {\em Walsh transform} of $f$
\[
\widehat{f}(A,B) = \sum_{X \in \F} \chi \left(A f(X) + B x\right)
\]
where $\chi(\cdot) = (-1)^{\tracen{\cdot}}$, is another important tool. For the known APN functions, the Walsh spectrum $\{ \widehat{f}(A,B) \ : \ (0,0) \ne (A,B) \in \F \times \F \}$ consists of values which are usually ``low'' in absolute value. If $n$ is odd, the quadratic APN functions all have the same {\em optimal} spectrum $\{ 0, \pm 2^{\frac{n+1}{2}} \}$ (these functions are optimal in the sense that the maximal absolute value of the Walsh spectrum is the smallest possible and called {\em almost bent (AB)}). If $n = 2m$ is even, almost all of the known quadratic APN functions  have the spectrum $\{ 0, \pm 2^m, \pm 2^{m+1}\}$ with a single exception on $\f{2^6}$ which has the spectrum $\{ 0, \pm 2^m, \pm 2^{m+1}, \pm 2^{m+2} \}$ (see \cite{Dillon1}). Generalizing this sporadic example to an infinite class is a very interesting open problem.

For a given function $f$, the set of zeroes of the Walsh transform $\{ (X,Y) \ : \ \widehat{f}(X,Y) = 0 \} \cup \{ (0,0) \}$ has an interesting connection to being a permuta\-tion/CCZ-equivalent to a permutation. A function $f$ is a permutation if and only if the {\em lines} $(0,Y)$ and $(X,0)$ are zeroes of the Walsh transform where $X,Y \in \F$. CCZ-equivalence maps the subspaces of the Walsh zeroes of $f$ to the subspaces of the Walsh zeroes of $f'$ and therefore $f$ is CCZ-equivalent to a permutation if and only if the zeroes of the Walsh transform of $f$ contains two subspaces of dimension $n$ intersecting only trivially \cite{Dillon2}. We say that $f$ satisfies the {\em subspace property} \cite{Dillon2} if
\begin{equation}\label{SP}
	f(aX) = a^{2^k+1}f(X), \hspace{1.0cm} \forall a \in \K.
\end{equation}
for some integer $k$. The Kim function $\kappa(X) = X^3 + X^{10} + A X^{24}$, where $A$ is a generator of $\f{2^6}^*$ satisfies the subspace property. A result of this is the set of Walsh zeroes of $\kappa$ has more structure with respect to subspaces $u\K$ where $u \in \mathcal{P}_7$, that is to say the zeroes of the Walsh spectrum contains the subspaces $\{ (u_1 x, v_1 y) \ : \ x,y \in \K \}, \{ (u_2 x, v_2 y) \ : \ x,y \in \K \}$ for some $u_1,u_2,v_1,v_2 \in \mathcal{P}_7$. According to \cite{Dillon2}, this explains the simplicity of the double simplex decomposition of the code corresponding to $\kappa$. See \cite{Dillon2} for details and some other interesting properties implied by the subspace property.

We refer the reader to excellent surveys for more on APN, AB, crooked and other nonlinear functions \cite[Chapter ``Vectorial Boolean functions for cryptography'' by Carlet]{CRAMA}, \cite[Chapter ``PN and APN functions'' by Charpin]{HFF}, \cite[Chapter ``Special mappings of finite fields'' by Kyureghyan]{RICAM}.

\subsection{An introductory lemma}
The following lemma contains some basics of our method and will be used extensively throughout the paper. Recall than $n=2m$ and $q = 2^m$. We define $\Ind{k}$ to be the largest positive integer $e$ such that $2^e | k$.

\begin{lemma}\label{tracelemma}
Let $g \in \mathcal{T}_1$. 
\begin{enumerate}[(i)]
\item $\tra{g^{2^k+1}} = g^{2^k}+g+1$.
\item $g^{2^k+1} = \tra{g^{2^k+1}} g + \tra{g^3} + 1$.

\item For integers $d,e > 0$, we have 
\begin{itemize}
\item $\Gcd{2^d-1}{2^e+1} > 1$ if and only if $\Ind{d} > \Ind{e}$,
\item $\Gcd{2^d+1}{2^e+1} > 1$ if and only if $\Ind{d} = \Ind{e}$,
\item $1 \in \{\Gcd{2^d-1}{2^e+1}, \Gcd{2^d+1}{2^e+1} \}$.
\end{itemize}

\item Let $\Gcd{m}{k} = d$. Then
\[
\tracend{d}{g^{2^k+1}} = \left\{ \begin{array}{rl}
1 & \textrm{if } \Gcd{2^k+1}{q+1} = 1,\\
0 & \textrm{otherwise}.
\end{array}\right.
\]
Also
\[
\tracen{g^{2^k+1}} = \left\{ \begin{array}{rl}
1 & \textrm{if $m+k$ is odd},\\
0 & \textrm{if $m+k$ is even}.
\end{array}\right.
\]

\item Let $Z_{k,\epsilon} = \{ g \in \mathcal{T}_1 \ : \ g^{2^k}+g = \epsilon\}$ for $\epsilon \in \f{2}$. We have 

\begin{itemize}
\item $\phi(Z_{k,0}) \cup \{1\}$ is the set of $\Gcd{2^k-1}{q+1}$st roots of unity in $\mathcal{P}_{q-1}$,
\item $\phi(Z_{k,1}) \cup \{1\}$ is the set of $\Gcd{2^k+1}{q+1}$st roots of unity in $\mathcal{P}_{q-1}$,
\item $\#Z_{k,0} = {\Gcd{2^k-1}{q+1}} -1$,
\item $\#Z_{k,1} = {\Gcd{2^k+1}{q+1}} -1$.
\end{itemize}

\item Let $\Gcd{k}{n} = 1$. Then $\tra{g^{2^k+1}} = 0$ if and only if $m$ is odd and $g \in \f{4} \setminus \f{2}$.
\end{enumerate}
\end{lemma}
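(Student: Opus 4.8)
The plan is to reduce the condition to a Frobenius equation and then localize $g$ to the subfield $\f{4}$. By part (i) we have $\tra{g^{2^k+1}} = g^{2^k}+g+1$, so $\tra{g^{2^k+1}}=0$ is equivalent to $g^{2^k}+g=1$. Since $g \in \mathcal{T}_1$ satisfies $g^q=g+1$, this is the same as $g^{2^k}=g^q=g^{2^m}$; in the notation of part (v) it says exactly $g \in Z_{k,1}$. I also record that the hypothesis $\Gcd{k}{n}=\Gcd{k}{2m}=1$ forces $k$ to be odd (so $\Ind{k}=0$) and $\Gcd{k}{m}=1$.

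The heart of the argument is to localize any such $g$ to $\f{4}$. Applying the Frobenius $x \mapsto x^{2^k}$ to both sides of $g^{2^k}=g^{2^m}$ gives $g^{2^{2k}}=g^{2^{m+k}}$; on the other hand $g^{2^{m+k}}=(g^{2^k})^{2^m}=(g^{2^m})^{2^m}=g^{2^{2m}}=g$, the last equality because $g \in \F = \f{2^{2m}}$. Hence $g^{2^{2k}}=g$, so $g$ lies in $\f{2^{2k}}\cap\f{2^{2m}}=\f{2^{2\Gcd{k}{m}}}=\f{4}$, where I used $\Gcd{k}{m}=1$.

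It then remains to test the elements of $\f{4}$ against the two requirements $g \in \mathcal{T}_1$ and $g^{2^k}+g=1$. The subfield $\f{2}=\{0,1\}$ is disjoint from $\mathcal{T}_1$ (both elements have relative trace $0$), so any solution is one of the two primitive cube roots $\omega,\omega^2$, i.e. $g \in \f{4}\setminus\f{2}$. For $g=\omega$ we have $g^q=\omega^{2^m}$, which equals $\omega^2$ exactly when $2^m\equiv 2 \pmod{3}$, that is when $m$ is odd; thus $\omega \in \mathcal{T}_1$ (equivalently $\tra{\omega}=\omega+\omega^2=1$) if and only if $m$ is odd. Finally, since $k$ is odd we have $2^k\equiv 2 \pmod{3}$, so $\omega^{2^k}=\omega^2=\omega+1$ and the relation $g^{2^k}+g=1$ holds automatically, and the same computations apply to $\omega^2$. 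Combining, $\tra{g^{2^k+1}}=0$ holds if and only if $m$ is odd and $g \in \f{4}\setminus\f{2}$.

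The step I expect to be the main obstacle is the localization $g \in \f{4}$: one must iterate the Frobenius relation in the correct order and invoke $\Gcd{k}{m}=1$ at the end. (One could instead avoid it by counting: part (v) gives $\#Z_{k,1}=\Gcd{2^k+1}{q+1}-1$, which by part (iii) is nonzero exactly when $\Ind{k}=\Ind{m}$, i.e. when $m$ is odd; but that route additionally requires pinning the exact value $\Gcd{2^k+1}{2^m+1}=3$ before one can conclude that $\omega,\omega^2$ are the only solutions.)
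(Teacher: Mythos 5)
Your argument addresses only part (vi) of the lemma (parts (i), (iii) and (v) are invoked as known rather than proved), and for that part it is correct and follows essentially the same route as the paper: reduce via (i) to $g^{2^k}+g=1$, locate the solutions in $\f{4}\setminus\f{2}$, and observe that $\omega,\omega^2\in\mathcal{T}_1$ exactly when $m$ is odd. The only difference is cosmetic: the paper gets the two solutions $\omega,\omega^2$ directly from the fact that $X\mapsto X+X^{2^k}$ is two-to-one when $\Gcd{k}{n}=1$, whereas you localize $g$ to $\f{4}$ by iterating the Frobenius relation $g^{2^k}=g^q$ to obtain $g^{2^{2k}}=g$; both are valid.
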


\begin{proof} Let $g \in \mathcal{T}_1$.
\begin{enumerate}[(i)]
\item Using $g^q = g + 1$, we get $\tra{g^{2^k+1}} =  g^{2^k+1}+(g+1)^{2^k+1} = g^{2^k}+g+1$. 

\item Obvious after applying Lemma \ref{tracelemma} (i) to $2^k+1$ and $3$.  

\item We have 
\[
\Gcd{2^d-1}{2^e+1} = \frac{2^{\Gcd{2e}{d}}-1}{2^{\Gcd{e}{d}}-1}
\]
and 
\[
\Gcd{2^d+1}{2^e+1} = \frac{(2^{\Gcd{e}{d}}-1)(2^{\Gcd{2e}{2d}}-1)}{(2^{\Gcd{2e}{d}}-1)(2^{\Gcd{e}{2d}}-1)}.
\]
The former is strictly greater than $1$ if and only if $\Ind{d} > \Ind{e}$, and the latter is strictly greater than $1$ if and only $\Ind{d} = \Ind{e}$. Clearly, both cannot happen simultaneously.

\item Let $m=m'd$ and $k=k'd$. We have
\begin{align*}
\tracex{m}{d}{g^{2^k}+g+1} & = \sum_{i=0}^{m'-1} g^{2^{di}} + \sum_{i=k'}^{m'+k'-1} g^{2^{di}}  + \sum_{i=0}^{m'-1} 1\\
& = \sum_{i=0}^{m'-1} g^{2^{di}} + \sum_{i=k'}^{m'-1} g^{2^{di}} + \sum_{i=0}^{k'-1} (g+1)^{2^{di}} +\sum_{i=0}^{m'-1} 1\\
& = \sum_{i=0}^{m'-1} g^{2^{di}} + \sum_{i=0}^{m'-1} g^{2^{di}} + \sum_{i=0}^{k'-1} 1 +\sum_{i=0}^{m'-1} 1\\
& = k'+m'.
\end{align*}

We have $k'+m'$ even if and only if $\Ind{k} = \Ind{m}$ if and only if $\Gcd{2^k+1}{q+1} > 1$. For the second claim, observe that if $m+k$ is odd then $\Gcd{2^k+1}{q+1} = 1$ and $d$ is odd. We have $\tracex{d}{1}{\tracex{n}{d}{g^{2^k+1}}} = 1.$ If $m+k$ is even then $\Gcd{2^k+1}{q+1} > 1$ and $\tracex{d}{1}{\tracex{n}{d}{g^{2^k+1}}} = 0.$


\item Since $\phi$ is a bijection from $\mathcal{T}_1$ to $\mathcal{P}_{q-1} \setminus \{ 1 \}$, we have
\begin{align}
\{\phi(g) \ : \ g \in \mathcal{T}_1 \ \textrm{ s.t. } \ g^{2^i-1} = 1\} & = \{ u \in \mathcal{P}_{q-1} \setminus \{ 1 \} \ : \ \phi^{-1}(u)^{2^i-1} = 1 \} \nonumber \\
		& = \{ u \in \mathcal{P}_{q-1}\setminus \{ 1 \}  \ : \ \left(\frac{u^{2^i-1}}{\tra{u}^{2^i-1}}\right)^{\frac{q}{2}} = 1 \}\nonumber \\
		& = \{ u \in \mathcal{P}_{q-1}\setminus \{ 1 \}  \ : \ {u^{2^i-1}} = 1 \} \nonumber \\ 
		& = \{ u \in \mathcal{P}_{q-1}\setminus \{ 1 \}  \ : \ {u^{\Gcd{2^i-1}{q+1}}} = 1 \} \label{rootseq}
\end{align}
The penultimate line follows from the observation that if $u^{2^i-1} = 1$ then $u \in \f{2^i}$ which forces $\tra{u} \in \f{2^i}$, conversely $u^{2^i-1} \ne 1$ implies $u^{2^i-1} \not\in \K$ and $\frac{u^{2^i-1}}{\tra{u}^{2^i-1}} \ne 1$.

We have $g^{2^k} + g = 0 \iff g^{2^k-1} = 1$ and the first result follows if we set $i = k$ in Eq. \eqref{rootseq}. The cardinality follows from the fact that $\mathcal{P}_{q-1}$ is a cyclic group of order $q+1$. Similarly, $g^{2^k} + g = 1 \iff g^{2^{2k}-1} = 1$ and $g^{2^k-1} \ne 1$. Note that in this case $2k \mid 2m$ therefore $2^k-1 \mid q-1$ and $\Gcd{2^{2k}-1}{q+1} = \Gcd{2^k+1}{q+1}$. Since $k\mid m$, $g^{2^k-1} \ne 1$ since $\mathcal{T}_1 \cap \f{2^k} = \emptyset$. Setting $i = 2k$ in Eq. \eqref{rootseq}, we prove the second result and the corresponding cardinality. 

\item By Lemma \ref{tracelemma} (i), we have $\tra{g^{2^k+1}}=0$ if and only if $g + g^{2^k} = 1$. Since $\Gcd{n}{k} = 1$, the equation $X + X^{2^k} = 1$  has  two solutions $\omega,\omega^2 \in \f{4} \setminus \f{2}$. Since $\omega,\omega^2 \in \mathcal{T}_1$ if and only if $m$ is odd, we are done.



\end{enumerate}
\end{proof}

\subsection{A technical lemma}
The following lemma is rather technical and each part of it will be used in just one instance in Theorem \ref{thm_count} and Theorem \ref{thm_hyp} respectively.

\begin{lemma}\label{techlemma}
Let 
\[
C_{k,0} = \left\{ \frac{g^{2^k+1}+g}{g^{2^k}+g} \ : \ g \in \mathcal{T}_1 \setminus Z_{k,0} \right\},
\]
and
\[
D_{k,1} = \left\{ \frac{g^{2^k+1}+1}{g^{2^k}+g+1} \ : \ g \in \mathcal{T}_1 \setminus Z_{k,1} \right\}.
\]
We have 
\begin{enumerate}[(i)]
\item
\begin{itemize}
\item $C_{k,0} \subseteq \mathcal{T}_1$,
\item $\#C_{k,0} = \frac{q+1}{\Gcd{2^k-1}{q+1}}-1$, 
\item $C_{k,0} = \mathcal{T}_1 \iff \Gcd{2^k-1}{q+1} = 1$.
\end{itemize}
\item 
\begin{itemize}
\item $D_{k,1} \subseteq \mathcal{T}_1$,
\item $D_{k,1} = \mathcal{T}_1 \iff \Gcd{2^k+1}{q+1}=1$,
\end{itemize}

\end{enumerate}
\end{lemma}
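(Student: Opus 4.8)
The plan is to transport both fractions, via the bijection $\phi:\mathcal{T}_1\to\mathcal{P}_{q-1}\setminus\{1\}$ of Section~\ref{sec_pre}, to the cyclic group $\mathcal{P}_{q-1}$ of order $q+1$, where the two gcds $\Gcd{2^k-1}{q+1}$ and $\Gcd{2^k+1}{q+1}$ appear as sizes of kernels of power maps; these kernels were already matched with $Z_{k,0}$ and $Z_{k,1}$ in Lemma~\ref{tracelemma}(v). First I would dispose of both containments at once. For $g\in\mathcal{T}_1$ we have $g^q=g+1$, so $(g^{2^k}+g)^q=g^{2^k}+g$ and $(g^{2^k}+g+1)^q=g^{2^k}+g+1$; hence the two denominators lie in $\K$ (the second equals $\tra{g^{2^k+1}}$ by Lemma~\ref{tracelemma}(i)) and are nonzero exactly off $Z_{k,0}$, resp.\ $Z_{k,1}$. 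Writing each fraction as $N/D$ with $D\in\K^*$, a one-line computation using Lemma~\ref{tracelemma}(i) and $\tra{g}=1$ gives $\tra{N}=D$ in both cases; since $\tra{\cdot}$ is $\K$-linear, $\tra{N/D}=\tra{N}/D=1$, so $N/D\in\mathcal{T}_1$. This proves $C_{k,0},D_{k,1}\subseteq\mathcal{T}_1$.

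For $C_{k,0}$, because $D\in\K^*$ is killed by the $(q-1)$-st power, applying $\phi$ to $F(g)=(g^{2^k+1}+g)/(g^{2^k}+g)$ gives $\phi(F(g))=\phi(g)^{1-2^k}$: with $u=\phi(g)$ and $g=(u+1)^{-1}$ the fraction collapses to $u^{1-2^k}$ after using $(u+1)^{2^k}=u^{2^k}+1$. Thus under $\phi$ the map $F$ becomes the power map $u\mapsto u^{-(2^k-1)}$ on $\mathcal{P}_{q-1}$, which is $\Gcd{2^k-1}{q+1}$-to-one onto the subgroup of order $(q+1)/\Gcd{2^k-1}{q+1}$, with kernel the $\Gcd{2^k-1}{q+1}$-th roots of unity. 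By Lemma~\ref{tracelemma}(v) this kernel is exactly $\phi(Z_{k,0})\cup\{1\}$, i.e.\ the points removed from the domain. Restricting, the image is that subgroup minus $\{1\}$; transporting back through $\phi$ gives $\#C_{k,0}=(q+1)/\Gcd{2^k-1}{q+1}-1$, and $C_{k,0}=\mathcal{T}_1$ iff the subgroup is all of $\mathcal{P}_{q-1}$, i.e.\ iff $\Gcd{2^k-1}{q+1}=1$.

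For $D_{k,1}$ the easy direction is pure counting. Through $\phi$ we have $\#\mathcal{T}_1=q$, and $\#Z_{k,1}=\Gcd{2^k+1}{q+1}-1$ by Lemma~\ref{tracelemma}(v), so the domain $\mathcal{T}_1\setminus Z_{k,1}$ has $q+1-\Gcd{2^k+1}{q+1}$ elements. If $\Gcd{2^k+1}{q+1}>1$ this is strictly less than $q=\#\mathcal{T}_1$, forcing $D_{k,1}\subsetneq\mathcal{T}_1$. The converse is the crux. When $\Gcd{2^k+1}{q+1}=1$ we have $Z_{k,1}=\emptyset$, so the domain is all of $\mathcal{T}_1$, of size $q=\#\mathcal{T}_1$; hence surjectivity is equivalent to injectivity of $G(g)=(g^{2^k+1}+1)/(g^{2^k}+g+1)$ on $\mathcal{T}_1$. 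Here the situation differs from $C_{k,0}$: the $\phi$-image of $G$ is the honestly degree-$(2^k+1)$ rational map $u\mapsto(u^{2^k}+u+1)/(u^{2^k+1}+u^{2^k}+u)$, which is not a power map for $k\ge2$, so injectivity is not a statement about a group homomorphism.

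The plan for this last, hardest step is to turn a hypothetical collision into a forbidden root. Using $g^{2^k+1}=\tra{g^{2^k+1}}\,g+g^2+g$ (Lemma~\ref{tracelemma}(ii)) together with $g_1+g_2\in\K$ (valid since $\tra{g_1}=\tra{g_2}=1$), an equality $G(g_1)=G(g_2)$ reduces to a single identity in $\K$; equivalently, a point $u_2=\phi(g_2)\ne u_1=\phi(g_1)$ of $\mathcal{P}_{q-1}$ is a further root of $w\,v^{2^k+1}+(w+1)v^{2^k}+(w+1)v+1$, where $w=\phi(G(g_1))$. After dividing out the known root $v=u_1$, I would show that the residual degree-$2^k$ factor has no root in the order-$(q+1)$ subgroup $\mathcal{P}_{q-1}$ precisely when $\Gcd{2^k+1}{q+1}=1$. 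I expect this root count inside $\mathcal{P}_{q-1}$ to be the main obstacle and the exact place the hypothesis $\Gcd{2^k+1}{q+1}=1$ is used: the mechanism I would aim for is that any spurious root forces $u_2/u_1$ (or a closely related quotient) to be a nontrivial $(2^k+1)$-st root of unity in $\mathcal{P}_{q-1}$, of which there are none when $\Gcd{2^k+1}{q+1}=1$.
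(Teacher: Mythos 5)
Your containments and all of part (i) are correct and follow essentially the paper's own route: the paper likewise pushes the fraction through $\phi$, rewrites the numerator as $g^{2^k q+1}$ so that the induced map on $\mathcal{P}_{q-1}$ is $u\mapsto u^{2^k q+1}$ (identical to your $u\mapsto u^{1-2^k}$, since $u^{q+1}=1$), and identifies the kernel of this power map with $\phi(Z_{k,0})\cup\{1\}$ via Lemma \ref{tracelemma}(v); the counting direction of (ii) is also the paper's argument. The genuine gap is the converse in (ii) --- surjectivity of $D_{k,1}$ when $\Gcd{2^k+1}{q+1}=1$ --- which you rightly call the crux but then only sketch. What you offer is a programme (``I would show\dots'', ``the mechanism I would aim for\dots'') whose key step is unverified: that a collision $G(g_1)=G(g_2)$ forces $u_2/u_1$ to be a nontrivial $(2^k+1)$-st root of unity in $\mathcal{P}_{q-1}$. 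Nothing in the proposal establishes this, and it is doubtful as stated: the quadrinomial $w\,v^{2^k+1}+(w+1)v^{2^k}+(w+1)v+1$ is not a binomial, so its roots are not related by multiplication by $(2^k+1)$-st roots of unity; polynomials of the shape $X^{2^k+1}+SX^{2^k}+TX+U$ can have up to $2^{\Gcd{k}{m}}+1$ roots, and deciding whether such a polynomial has roots in the subgroup $\mathcal{P}_{q-1}$ is exactly the hard problem that Section \ref{sec_hex} of the paper is devoted to --- so this reduction risks being circular, or at least strictly harder than the claim being proved.

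The hypothesis $\Gcd{2^k+1}{q+1}=1$ in fact enters through a trace computation inside $\K$, not through roots of unity in $\mathcal{P}_{q-1}$. The paper's finishing move is additive: two elements of $\mathcal{T}_1$ differ by an element of $\K$, so a collision reads $G(g)=G(g+a)$ with $a\in\K^*$. Cross-multiplying and simplifying with $S=\tra{g^3}$ and $T=\tra{g^{2^k+1}}$ (via Lemma \ref{tracelemma}(i)--(ii)) collapses the collision to
\[
a^{2^k}S+aS^{2^k}+a^{2^k+1}T=0 .
\]
If $S=0$ this forces $a=0$ since $T\ne0$; otherwise substituting $a=bS$ gives $T=b^{-1}+b^{-2^k}$, which is impossible because, with $d=\Gcd{m}{k}$, Lemma \ref{tracelemma}(iv) gives $\tracex{m}{d}{T}=1$ precisely under the hypothesis $\Gcd{2^k+1}{q+1}=1$, while $\tracex{m}{d}{b^{-1}+b^{-2^k}}=\tracex{m}{d}{b^{-1}+(b^{-1})^{2^d}}=0$. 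Your reduction of the collision ``to a single identity in $\K$'' via Lemma \ref{tracelemma}(ii) is pointing in the right direction; the missing idea is to stay in $\K$ and kill that identity with the relative trace $\tracex{m}{d}{\cdot}$, rather than to transport it back to $\mathcal{P}_{q-1}$ and attempt a root count there.
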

\begin{proof}
\begin{enumerate}[(i)]
\item Using Lemma \ref{tracelemma} (i) one derives that the set $C_{k,0}$ consists of elements of the form $\frac{X}{\tra{X}}$ and therefore $C_{k,0} \subseteq \mathcal{T}_1$. We will count the number of elements under the $\phi$ map. The numerators satisfy $g^{2^k+1} + g= g(g+1)^{2^k} = g^{2^k q+1}$. Define $d = \Gcd{2^k q + 1}{q+1}$. Note that $d = \Gcd{2^k-1}{q+1}$. Now if $g \in \mathcal{T}_1 \setminus Z_{k,0}$ then $\phi\left(\frac{g^{2^k q+1}}{g^{2^k}+g}\right) = \phi(g^{2^k q+1})$ and
\begin{align*}
\left\{ \phi\left(g^{2^k q+1}\right) \ : \ g \in \mathcal{T}_1 \setminus Z_{k,0} \right\} & = \left\{ \phi(g)^{2^k q+1} \ : \ g \in \mathcal{T}_1 \setminus Z_{k,0} \right\} \\
 & = \left\{ u^{2^k q+1} \ : \ u \in \mathcal{P}_{q-1} \setminus \left( \phi(Z_{k,0}) \cup \{1\} \right) \right\}\\
 & = \left\{ u^d \ : \ u \in \mathcal{P}_{q-1} \right\} \setminus \{1\},
\end{align*}
since $u^d$ and $u^{2^k q +1}$ are $d$-to-$1$ maps on $\mathcal{P}_{q-1}$ with the identical image sets which map only the $d$ elements of the set $\phi(Z_{k,0}) \cup \{1\}$ of $d$-th roots in $\mathcal{P}_{q-1}$ to $1$ by Lemma \ref{tracelemma} (v). 
\item As in Part (i), $D_{k,1}$ consists of elements of the form $\frac{X}{\tra{X}}$ and therefore $D_{k,1} \subseteq \mathcal{T}_1$. If $\Gcd{2^k+1}{q+1} > 1$ then $\#Z_{k,1} > 0$ and $D_{k,1} \ne \mathcal{T}_1$. Now assume $\Gcd{2^k+1}{q+1} = 1$. If $D_{k,1} \ne \mathcal{T}_1$, then there exists $a \in \K^*$ such that
\[
\frac{g^{2^k+1}+1}{g^{2^k}+g+1} = \frac{(g+a)^{2^k+1}+1}{(g+a)^{2^k}+(g+a)+1}.
\]
Recall that denominators are nonzero since $Z_{k,1} = \emptyset$. 
We then have, 
\begin{align*}
\frac{g^{2^k+1}+1}{g^{2^k}+g+1} & = \frac{(g^{2^k+1}+1)+a^{2^k+1}+a^{2^k}g+a g^{2^k}}{(g^{2^k}+g+1)+a^{2^k}+a} 
\end{align*}
which implies 
\[
(g^{2^k}+g+1)(a^{2^k+1}+a^{2^k}g+a g^{2^k}) = (g^{2^k+1}+1)(a^{2^k}+a)
\]
and in turn
\[
a^{2^k} S + a S^{2^k} + a^{2^k+1} T = 0
\]
where $S = \tra{g^3}$ and $T = \tra{g^{2^k+1}}$. By Lemma \ref{tracelemma} (iv), $T \ne 0$. If $S = 0$ then $a = 0$ and we are done. If $S \ne 0$ then put $a = bS$. Then
\[
S^{2^k+1}(b^{2^k}+b+b^{2^k+1} T) = 0
\]
and 
\begin{equation}\label{contra}
T = \frac{1}{b} + \frac{1}{b^{2^k}}.
\end{equation}
Let $d = \Gcd{m}{k}$. By Lemma \ref{tracelemma} (iv), $\tracex{m}{d}{T} = 1$ but $\tracex{m}{d}{\frac{1}{b}+ (\frac{1}{b})^{2^k}} = \tracex{m}{d}{\frac{1}{b}+ (\frac{1}{b})^{2^d}} = 0$, which contradicts \eqref{contra}.

\end{enumerate}
\end{proof}

\section{The new APN trinomial family}\label{sec_apn}

In this section, we will prove that $f_k$ is APN on $\f{2^n}$ if and only if $\Gcd{k}{n} = 1$ and $n = 2m = 4t$ where $t > 0$. Then we will prove its important properties such as the hyperplane and Walsh spectra, description of the embedded bent functions, subspace property and inequivalence to other families.

\subsection{Proof of almost perfect nonlinearity}

\begin{theorem}\label{thm_apn}
Let $f_k(X) = X^{2^k+1} + (\tra{X})^{2^k+1}$. Then $f_k$ is APN if and only if $m$ is even and $\Gcd{k}{n} = 1$.
\end{theorem}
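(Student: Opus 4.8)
The plan is to exploit that $f_k$ is quadratic: since $\tra{X}$ is $\F_2$-linear, both $X^{2^k+1}$ and $(\tra X)^{2^k+1}$ are quadratic, so every derivative $D_A f_k$ is $\F_2$-linear, and $f_k$ is APN exactly when for each $A\in\F^*$ the kernel of $D_A f_k$ is $\{0,A\}$. A direct expansion gives $D_A f_k(X)=AX^{2^k}+A^{2^k}X+(\tra A)^{2^k}\tra X+\tra A\,(\tra X)^{2^k}$. First I would dispose of $A\in\K^*$: there $\tra A=0$, the equation collapses to $A^{2^k}X+AX^{2^k}=0$, i.e. $X/A\in\f{2^{\Gcd{k}{n}}}$, so the kernel is $A\cdot\f{2^{\Gcd{k}{n}}}$. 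This has dimension $1$ iff $\Gcd{k}{n}=1$, which forces the gcd condition and makes all derivatives $D_A f_k$ with $A\in\K^*$ two-to-one once it holds.

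For $A\notin\K$ I would reduce to $A\in\mathcal{T}_1$ via the scaling identity $f_k(\lambda X)=\lambda^{2^k+1}f_k(X)$ for $\lambda\in\K$ (the subspace property), which gives $D_{\lambda A}f_k(\lambda X)=\lambda^{2^k+1}D_A f_k(X)$ and hence equal kernel sizes for $A$ and $\lambda A$; every $A\in\F\setminus\K$ is a $\K^*$-multiple of some $g\in\mathcal{T}_1$. Setting $A=g\in\mathcal{T}_1$ and writing $X=ag+b$ with $a,b\in\K$ (so $\tra X=a$), I would use Lemma~\ref{tracelemma}(i),(ii) to express $g^{2^k+1}$ and $g^{2^k}$ in the $\K$-basis $\{1,g\}$ and read off the two $\K$-coordinates of $D_g f_k(X)=0$ as the system $T(a+a^{2^k})+(b+b^{2^k})=0$ and $S(a+a^{2^k})+b(T+1)=0$, where $T=\tra{g^{2^k+1}}=g^{2^k}+g+1$ and $S=\tra{g^3}=g^2+g+1$.

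Writing $\ell(x)=x+x^{2^k}$ (so $\Ker{\ell}=\f2$ since $\Gcd{k}{m}=1$ because $n=2m$, and $\Image{\ell}=\{y\in\K:\tracem{y}=0\}$), I would note $T+1=g^{2^k}+g\ne0$ for all $g\in\mathcal{T}_1$, as $Z_{k,0}=\emptyset$ when $\Gcd{k}{n}=1$ by Lemma~\ref{tracelemma}(v); hence $\ell(a)=0$ forces $a\in\{0,1\}$ and then $b=0$, leaving only the trivial kernel $\{0,g\}$. The substantive case is $c:=\ell(a)\ne0$, now assuming $m$ even. Then $S\ne0$ (else $g\in\f4\subseteq\K$, impossible for $g\in\mathcal{T}_1$) and $T\ne0$ by Lemma~\ref{tracelemma}(vi). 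Solving the second equation for $b=Sc/(T+1)$ and substituting into the first, the key step is the identity $T^2+T+S=S^{2^k}$ (a short direct computation), which collapses the relation to $b^{2^k}=b\,S^{2^k-1}$, i.e. $(b/S)^{2^k-1}=1$; as $\Gcd{k}{m}=1$ this gives $b=S$ and then $c=T+1$. I would then rule this out by a parity obstruction: a genuine solution needs $c\in\Image{\ell}$, yet $\tracem{T+1}=\tracem{T}=\tracen{g^{2^k+1}}=1$ by transitivity of trace and Lemma~\ref{tracelemma}(iv) (valid since $m$ even gives $\Gcd{2^k+1}{q+1}=1$), while $\tracem{1}=0$, so $\tracem{c}=1\ne0$. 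This contradiction shows the kernel is $\{0,g\}$ for every $g\in\mathcal{T}_1$, completing the APN direction.

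For the necessity of $m$ even I would exhibit a bad $A$ when $m$ is odd: then $\omega\in\f4\setminus\f2$ lies in $\mathcal{T}_1$ by Lemma~\ref{tracelemma}(vi), and for $A=\omega$ one has $S=T=0$, so the system reduces to $b=0$ with $a$ free, i.e. $D_\omega f_k$ vanishes on the whole subspace $\omega\K$ of dimension $m>1$; hence $f_k$ is not APN. The main obstacle is the middle computation: collapsing the bivariate linear system to the single equation $(b/S)^{2^k-1}=1$ through the identity $T^2+T+S=S^{2^k}$, and then locating the genuine obstruction in the absolute trace $\tracem{T+1}$ rather than in the power equation itself. Everything else is bookkeeping with Lemma~\ref{tracelemma}.
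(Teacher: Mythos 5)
Your proposal is correct and follows essentially the same route as the paper: the same Trace-$0$/Trace-$1$ reduction of $A$ to $g\in\mathcal{T}_1$, the same coordinates $X=ag+b$ over $\K$, the same key identity $T^2+T+S=S^{2^k}$ forcing $b=S$ and $a+a^{2^k}=T+1$, and the same final obstruction $\tracem{T+1}=1$, with the identical counterexample $\omega\in\f{4}\setminus\f{2}$ when $m$ is odd. The only cosmetic difference is that you write the two $\K$-coordinates as an explicit linear system and eliminate $b$ first, whereas the paper extracts one equation via $\tra{L_g(X)}=0$ and eliminates $x$ first.
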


\begin{proof}
We have $f_k(X) = X^{(2^k + 1)q} + X^{2^k q+1} + X^{2^k+q}$. Let 
\[ 
D_A f_k(X) = f_k(X)+f_k(X+A)+f_k(A). 
\]
Clearly, $f_k$ is APN if and only if $D_A f_k(X)$ is two-to-one for all $A \in \F^*$. Let $A = ag$, where $a \in \K^*$ , and $g \in \mathcal{T}_1 \cup \{1\}$.
We have
\begin{align*}
D_A f_k(X) & = f_k(X)+f_k(X+A)+f_k(A)\\
			 & = A^q X^{2^k q} + A^{2^k q} X^q + A X^{2^k q} + A^{2^k q} X + A^q X^{2^k} + A^{2^k} X^q\\
			 & = A^{2^k q} X + A^q X^{2^k} + \tra{A^{2^k}} X^q + \tra{A} X^{2^k q}
\end{align*}
 If $g = 1$ then $A = a$ and $\tra{a} = 0$. Therefore 
\begin{equation}
D_a f_k(X) = a^{2^k} X + a X^{2^k} \label{hypeq1}
\end{equation}
is two-to-one if and only if $\Gcd{k}{n} = 1$.

Therefore we focus only on $\Gcd{k}{n}=1$ case. Now assume $g \in \mathcal{T}_1$ and $h = g^q=g+1$. We get 
\[
D_A f_k(X) = (ah)^{2^k} X + (ah) X^{2^k} + a^{2^k} X^q + a X^{2^k q}.
\]
We apply the change of variable $X = aY$ and get 
\begin{align}
D_A f_k(Y) & = a^{2^k+1} \left(h^{2^k} Y + h Y^{2^k} + Y^q + Y^{2^k q}\right) \nonumber \\
				 & = a^{2^k+1} \left(g Y^{2^k} + g^{2^k} Y + \tra{Y + Y^{2^k}}\right). \label{hypeq2} 
\end{align}
Therefore, we will only be interested in two-to-oneness of 
\[
L_g(X) = g X^{2^k} + g^{2^k} X + \tra{X + X^{2^k}}. 
\]

Let for any fixed $g$, $X = xg+y$, where $x,y \in \K$.
\begin{align}
L_g(X) & = g X^{2^k} + g^{2^k} X + \tra{X + X^{2^k}} \nonumber\\
			 & = g (xg+y)^{2^k} + g^{2^k} (xg+y) + \tra{(xg+y) + (xg+y)^{2^k}} \nonumber\\
			 & = x^{2^k} g^{2^k+1} + y^{2^k} g + x g^{2^k+1} +y g^{2^k} + x + x^{2^k} \nonumber\\
			 & = (x+x^{2^k}) g^{2^k+1} + y^{2^k} g + y g^{2^k} + x + x^{2^k}. \label{eqforl}
\end{align}

Consider the case $m$ is odd. Let $\omega \in\f{4}\setminus \f{2}$. We have $L_\omega (X) = (\omega^{2^k+1}+1) (x+x^{2^k}) + (y \omega^{2^k}+ y^{2^k} \omega )$. Therefore $L_\omega(x\omega) = 0$ for all $x \in \K$ (i.e., when $y=0$), since $k$ is odd and $\omega^{2^k+1} = \omega^3 = 1$. Hence $D_{\omega}f_k$ are not 2-to-1.

Now, we need to focus only on $m$ even and $\Gcd{n}{k} = 1$ case. We have, from Lemma \ref{tracelemma} (vi), $\tra{g^{2^k+1}} \ne 0$. 

Assume now $x,y \in \f{2}$. By Lemma \ref{tracelemma} (iv) and $m$ is even, $\tracem{g^2+g} \ne 0$, therefore for any $g$, $L_g(xg+y) = 0$ if and only if $x \in \f{2}$ and $y = 0$ that is $X \in \{0,g\}$. Therefore $\Dim{\Ker{L_g}} \ge 1$. To prove two-to-oneness we have to show that these are the only solutions of $L_g(X)$ for all $g$.

If $L_g(X) = 0$ then $\tra{g X^{2^k} + g^{2^k} X} = 0$ and therefore (by \eqref{eqforl})
\begin{equation}\label{treq}
(x + x^{2^k}) \tra{g^{2^k+1}}+ (y + y^{2^k}) = 0. 
\end{equation}



We will assume $L_g(X) = 0$ and $x,y \not\in \f{2}$. This implies
\[
			(x + x^{2^k}) g^{2^k+1} + y^{2^k} g + y g^{2^k} + x + x^{2^k} = 0.
\]
Since we have assumed $L_g(X) = 0$, we obviously have $\tra{L_g(X)} = 0$ and we can employ \eqref{treq}, which implies
\[
x + x^{2^k} =  \frac{y + y^{2^k}}{\tra{g^{2^k+1}}}.
\]
Setting $T = \tra{g^{2^k+1}}$, we get
\[
L_g(X) = \frac{y + y^{2^k}}{T} g^{2^k+1} + y^{2^k} g + y g^{2^k} + \frac{y + y^{2^k}}{T} = 0.
\]
Since $g^{2^k+1} = T g + S + 1$ where $S = \tra{g^3}$ by Lemma \ref{tracelemma} (ii), we have
\begin{align*}
L_g(X) & = (y + y^{2^k}) \left(g + \frac{S+1}{T}\right) + y^{2^k} g + y g^{2^k} + \frac{y + y^{2^k}}{T}\\
			 & = (y + y^{2^k}) \left(g + \frac{S+1}{T}\right) + (y + y^{2^k}) g + y (g + g^{2^k}) + \frac{y + y^{2^k}}{T}\\
			 & = g (y + y^{2^k} + y + y^{2^k}) + \frac{(y+y^{2^k})(S+1)}{T} + \frac{y (T^2+T)}{T} + \frac{y + y^{2^k}}{T}\\
			 & = \frac{(y^{2^k}+y)S + y (T^2 + T)}{T} = 0
\end{align*}
from which we get
\[
S y^{2^k}  + S^{2^k} y = 0,
\]
since $T + T^2 = S+S^{2^k}$ by Lemma \ref{tracelemma} (i). Since $S y^{2^k} + S^{2^k} y$ is two-to-one on $\K$, we have $y = S$ (we are under assumption $y \ne 0$). Therefore 
\[
T = \frac{y^{2^k}+y}{x^{2^k}+x} = \frac{S^{2^k}+S}{x^{2^k}+x} = \frac{T^{2}+T}{x^{2^k}+x}
\]
implies $T + 1= x^{2^k} + x$. Since $\tracem{T+1} = 1$ by Lemma \ref{tracelemma} (iv), the equation cannot be satisfied for any $x \in \K \setminus \f{2}$. This proves the result.  

\end{proof}

\subsection{Hyperplane and Walsh spectra of the family}

In the last section we showed that the derivative $D_A f_k (X) = f_k(X) + f_k(X+A) + f_k(A)$ of $f_k$ is an hyperplane of the form
\[
\left\{ a^{2^k+1} \left( g^{2^k} X + g X^{2^k} + \tra{X + X^{2^k}}\right) \ : \ X \in \F \right\}
\]
with $A = ag$, where $g \in \mathcal{T}_1$ and $a \in \K^*$ or
\[
\left\{ a^{2^k+1} \left(X + X^{2^k}\right) \ : \ X \in \F \right\}
\]
for  $A = a \in \K^*$. The next theorem provides the hyperplane spectrum of the new APN family $f_k$. For a linearized polynomial
\[
L(X) = \sum_{i = 0}^{n-1} a_i X^{2^i},
\]
the adjoint of $L$ is defined as
\[
L^*(X) = \sum_{i=0}^{n-1} a_i^{2^{n-i}} X^{2^{n-i}}.
\]

\begin{theorem}\label{thm_hyp}
Let $A = ag$ where $a \in \K^*$ and $g \in \mathcal{T}_1$. Then the derivatives $D_A f_k$ of $f_k$ satisfy
\[
\Image{D_A f_k} = H_{\beta_A}
\]
where 
\[
\beta_A = \frac{1}{a^{2^k+1}} \frac{\tra{g^{2^k+1}}}{\tra{g^3}^{2^k+1}} \left(g + 1 + \frac{\tra{g^3}}{\tra{g^{2^k+1}}}\right).
\]
The hyperplane spectrum $\mathcal{H}_{f_k}$ of $f_k$ has constant multiplicity $3$.
\end{theorem}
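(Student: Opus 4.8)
The plan is to first pin down $\beta_A$ and then establish the multiplicity count by decomposing $\beta_A$ itself in the Trace-$0$/Trace-$1$ coordinates. Since $f_k$ is quadratic, $D_A f_k$ is $\F_2$-linear in $X$, and since $f_k$ is crooked its image is a hyperplane $H_{\beta_A}$, whose normal $\beta_A$ is the unique nonzero element orthogonal (with respect to $\tracen{\cdot}$) to $\Image{D_A f_k}$. Using the change of variable $X = aY$ from Theorem \ref{thm_apn}, $D_A f_k(aY) = a^{2^k+1}L_g(Y)$, so $\Image{D_A f_k} = a^{2^k+1}\Image{L_g}$; combined with the identity $a^{2^k+1}H_\gamma = H_{\gamma/a^{2^k+1}}$ this reduces the claim to $\Image{L_g} = H_{\gamma_g}$ where $\gamma_g = a^{2^k+1}\beta_A = \frac{T}{S^{2^k+1}}\big(g+1+\frac{S}{T}\big)$, abbreviating $T = \tra{g^{2^k+1}}$ and $S = \tra{g^3}$. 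By Theorem \ref{thm_apn} (in the APN regime $m$ even, $\Gcd{n}{k}=1$) we have $\Ker{L_g} = \{0,g\}$, so $\Image{L_g}$ is genuinely a hyperplane; hence it suffices to check $\Image{L_g}\subseteq H_{\gamma_g}$, i.e.\ $\tracen{\gamma_g L_g(Y)} = 0$ for every $Y$.

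To verify this I would write $Y = xg+y$ and $\gamma_g = c_1 g + c_0$ with $c_1,c_0 \in \K$, and pass to the Trace-decomposition $L_g(Y) = Pg + Q$ ($P,Q\in\K$) obtained as in \eqref{eqforl} after substituting $g^{2^k+1} = Tg+S+1$ and $g^{2^k} = T+g+1$. Because $\tra{g^2} = \tra{g} = 1$ and $\tra{1}=0$, the inner trace collapses to $\tra{\gamma_g L_g(Y)} = c_1(P+Q) + c_0 P \in \K$, and after inserting the relation $T^2+T = S + S^{2^k}$ from Lemma \ref{tracelemma} (i) this expression simplifies to $\frac{y}{S} + \big(\frac{y}{S}\big)^{2^k}$. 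Applying the outer trace and the Frobenius-invariance $\tracem{Z^{2^k}} = \tracem{Z}$ makes it vanish, so $\tracen{\gamma_g L_g(Y)}=0$, giving $\Image{L_g} = H_{\gamma_g}$ and the stated $\beta_A$.

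For the multiplicity I would exploit the Trace-$0$/Trace-$1$ decomposition of $\beta_A$ itself: a direct computation gives $\tra{\beta_A} = \frac{T}{a^{2^k+1}S^{2^k+1}} \in \K^*$ and $\beta_A/\tra{\beta_A} = g+1+\frac{S}{T} =: h_g \in \mathcal{T}_1$, so $\beta_A = \frac{T}{a^{2^k+1}S^{2^k+1}}\,h_g$ is exactly this decomposition. I treat the two kinds of $A$ separately. For $A = a \in \K^*$ (the $g=1$ case) one has $\Image{D_a f_k} = a^{2^k+1}\Image{X+X^{2^k}} = H_{a^{-(2^k+1)}}$, since $\tracen{X+X^{2^k}}=0$; these normals lie in $\K^*$, and as $\Gcd{2^k+1}{q-1} = 3$ by Lemma \ref{tracelemma} (iii) (here $\Ind{m}\ge 1 > 0 = \Ind{k}$) the map $a\mapsto a^{-(2^k+1)}$ is three-to-one, so each occurs with multiplicity $3$. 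For $A = ag$ with $g\in\mathcal{T}_1$ the normal has $\tra{\cdot}$-component $h_g\notin\K$, so these normals are disjoint from the previous ones.

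The crucial point is that $g \mapsto h_g$ is a bijection of $\mathcal{T}_1$: indeed $h_g = \big(g + \tfrac{S}{T}\big) + 1$ with $g+\tfrac{S}{T} = \frac{g^{2^k+1}+1}{g^{2^k}+g+1}$, so $\{g+\tfrac{S}{T} : g\in\mathcal{T}_1\} = D_{k,1}$; in the APN regime $\Gcd{2^k+1}{q+1}=1$ (again Lemma \ref{tracelemma} (iii)), whence $Z_{k,1}=\emptyset$ and $D_{k,1} = \mathcal{T}_1$ by Lemma \ref{techlemma} (ii), forcing $g\mapsto g+\tfrac{S}{T}$, and hence $g\mapsto h_g$, to be bijective. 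Consequently, for a prescribed normal $\beta = x\,h$ ($x\in\K^*$, $h\in\mathcal{T}_1$) there is a unique $g$ with $h_g = h$, and the remaining equation $a^{2^k+1} = \frac{T}{x S^{2^k+1}}$ has $0$ or $3$ solutions $a\in\K^*$ by the same $\Gcd{2^k+1}{q-1}=3$; on the actual image it has exactly $3$. Thus every occurring hyperplane has multiplicity exactly $3$. The main obstacle is the trace identity of the first half and, in the second half, recognizing the normal's Trace-$0$/Trace-$1$ components and invoking Lemma \ref{techlemma} (ii) to promote $g\mapsto h_g$ to a bijection; once that is secured the counting is immediate.
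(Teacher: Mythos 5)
Your proposal is correct, and the overall architecture (Trace-$0$/Trace-$1$ coordinates throughout, Lemma \ref{techlemma} (ii) to get the multiplicity) matches the paper's; the difference lies in how the normal $\beta_A$ is obtained. The paper \emph{derives} it: it forms the adjoint $L_g^*$, passes to $M(X)=(L_g^*(X))^{2^k}$, writes $X=xg+y$, extracts the trace condition \eqref{tracecond}, and solves for the unique nonzero kernel element, which is how one would discover the formula in the first place. You instead \emph{verify} the stated formula by checking $\tracen{\gamma_g L_g(Y)}=0$ directly; writing $L_g(Y)=Pg+Q$ and $\gamma_g=c_1g+c_0$ and using $\tra{g^2}=\tra{g}=1$, $\tra{1}=0$, the inner trace does collapse to $c_1(P+Q)+c_0P=\frac{y}{S}+\bigl(\frac{y}{S}\bigr)^{2^k}$ once $T^2+T=S+S^{2^k}$ is inserted (I checked this computation; it is right), and the outer trace kills it. Your route is shorter given that the answer is in the statement, while the paper's is constructive; both rely on the same identities $g^{2^k+1}=Tg+S+1$ and $S,T\ne 0$ from Lemma \ref{tracelemma} (vi). On the multiplicity you are actually slightly more careful than the paper: you treat the $A\in\K^*$ branch explicitly, observe that its normals lie in $\K^*$ and are therefore disjoint from the normals $x\,h_g$ with $h_g\in\mathcal{T}_1$, and you use the correct identity $g+\frac{S}{T}=\frac{g^{2^k+1}+1}{g^{2^k}+g+1}$, so that $h_g$ is this quantity plus $1$; the paper's displayed identity has an off-by-one (its left-hand side should be $g+\frac{S_g}{T_g}$), which is harmless there since $\mathcal{T}_1+1=\mathcal{T}_1$, but your phrasing avoids the slip. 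The final count via $\Gcd{2^k+1}{q-1}=3$ is the same in both arguments.
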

\begin{proof}
Let $A = ag$ where $a \in \K^*$ and $g \in \mathcal{T}_1 \cup \{1\}$. If $A \in \K^*$ then $\beta_a = \frac{1}{a^{2^k+1}}$ by the Eq. \eqref{hypeq1}. Otherwise, $g \ne 1$ and we have $\beta_A =  \frac{1}{a^{2^k+1}} \beta_g$ by the Eq. \eqref{hypeq2}. We will find for each $g \in \mathcal{T}_1$ the unique $\beta_g \in \F^*$ which satisfies $\tracen{\beta_g L_g(X)} = 0, \ \forall X \in \F$ where $L_g(X) = g^{2^k} X + g X^{2^k} + \tra{X + X^{2^k}}$. This is equivalent to finding the unique nonzero solution of the adjoint map
\[
L_g^*(X) = g^{2^k} X + g^{2^{n-k}} X^{2^{n-k}} + \tra{X^{2^{n-k}} + X}
\]
which is guaranteed to have a unique nonzero solution because the adjoint preserves the dimension of the kernel of a linear map and  satisfies $\tracen{\beta_g L_g(X)} =  \tracen{L_g^*(\beta_g) X}$. 
Instead of the actual adjoint map, we will use
\[
M(X) = (L_g^*(X))^{2^k} = g^{2^{2k}} X^{2^k} + g X + \tra{X + X^{2^k}}
\]
which has the same kernel as $L_g^*(X)$.

Now, write $X = xg + y$, where $x,y \in \K$. We get
\begin{align*}
M(X) & = g^{2^{2k}} (xg+y)^{2^k} + g (xg+y) + x + x^{2^k} \\
				 & = x^{2^k} g^{2^{2k}+2^k} + y^{2^k} g^{2^{2k}} + x g^2 + yg + x + x^{2^k}.
\end{align*}
Now
\begin{align*}
M(X) = & x^{2^k} (T g + S+1)^{2^k} + y^{2^k} g + y^{2^k}(T^{2^k}+T)\\
	& + x g + x (S+1) + yg + x + x^{2^k}\\
= & x^{2^k} (T^{2^k} (g + T + 1) + S^{2^k}+1) + g (y^{2^k} + y + x) \\
  &	+ y^{2^k}(T^{2^k}+T) + x (S+1) + x + x^{2^k}\\
= & g (y^{2^k} + y + T^{2^k} x^{2^k} + x) \\
  & + y^{2^k}(T^{2^k}+T) + x (S+1) + x + x^{2^k} + x^{2^k} (T^{2^k} (T + 1) + S^{2^k}+1) \\
\end{align*}
where $T = \tra{g^{2^k+1}}$ and $S = \tra{g^3}$. 
If $M(X) = 0$ then $\tra{M(X)} = 0$ and therefore 
\begin{equation}
T^{2^k} x^{2^k} + x + y^{2^k} + y = 0. \label{tracecond}
\end{equation}
Since we want to get the zeroes of $M(X) = 0$, we proceed
\begin{align*}
0 & = y^{2^k}(T^{2^k}+T) + x (S+1) + x + x^{2^k} + x^{2^k} (T^{2^k} (T + 1) + S^{2^k}+1) \\
& = y^{2^k}(T^{2^k}+T) + x (S+T)+ T (T^{2^k}x^{2^k} + x) +x^{2^k}(S+T)^{2^k}\\
& = y^{2^k}(T^{2^k}+T) + x (S+T)+ T (y^{2^k}+y) +x^{2^k}(S+T)^{2^k}\\
& = (yT)^{2^k} + yT + x (S+T) + (x(S+T))^{2^k}
\end{align*}
where we used \eqref{tracecond} in line 3. This implies 
\begin{align}
y = & x \frac{S+T}{T}, \textrm{ or} \label{pos1}\\
y = & x\frac{S+T}{T} + \frac{1}{T} \label{pos2}
\end{align}
since $T$ is nonzero by Lemma \ref{tracelemma} (vi).
Employing \eqref{pos1} in \eqref{tracecond} we get

\begin{align*}
\left(x \frac{S+T}{T}\right)^{2^k} + x\frac{S+T}{T} + T^{2^k}x^{2^k} + x  & = 0 \\
x^{2^k} \frac{T^{2^k+1}+(T+S)^{2^k}}{T^{2^k}} + x \frac{S}{T} & = 0\\
x^{2^k} \frac{(T^2+T+S)^{2^k}}{T^{2^k}} + x \frac{S}{T} & = 0\\
x^{2^k} \frac{S^{2^{2k}}}{T^{2^k}} + x \frac{S}{T} & = 0
\end{align*}
since $T^2 + T = S^{2^k}+S$. This implies (since $S$ is nonzero by Lemma \ref{tracelemma} (vi))
\[
x^{2^k-1} = \frac{T^{2^k-1}}{S^{2^{2k}-1}} 
\]
which has the nonzero solution $x = \frac{T}{S^{2^k+1}}$. Since $M(X)$ must have unique nonzero solution, we do not need to check \eqref{pos2} which is guaranteed not to provide another solution. Therefore 
\[
\beta_g  = \frac{T}{S^{2^k+1}} \left(g + 1 + \frac{S}{T}\right),
\]
and therefore $\beta_A = \frac{1}{a^{2^k+1}}\beta_g$. Therefore the hyperplane spectrum of $f$ is
\[
\mathcal{H}_f = \left\{* \ a^3 \ : \ a \in \K^* \ *\right\} \cup \left\{* \ a^3 \beta_g  : \ a \in \K^*, h \in \mathcal{T}_1 \ *\right\}
\]
since $\Gcd{2^k+1}{q^2-1} = 3$. Now the only thing left to show is that the multiplicity of each element in $\mathcal{H}_f$ is $3$.
To that end note that
\[
g+1+\frac{S_g}{T_g}=\frac{g^{2^k+1}+1}{g^{2^k}+g+1}.
\]
By Lemma \ref{techlemma} (ii), $\{ g+1+\frac{S_g}{T_g} : g \in \mathcal{T}_1\} = \mathcal{T}_1$. Therefore $\beta_{ag} = \beta_{bh}$ if and only if $g = h$ and $a^3 = b^3$.  
\end{proof}
\begin{remark}
If $k = 1$ then $\beta_A = \frac{g}{a^3 \tra{g^3}^2}$ becomes very simple.
\end{remark}

The theorem has the following corollary, proof of which is standard. The reader is encouraged to read the detailed explanations in \cite[Section 3.1.3 of the Chapter ``Vectorial Boolean functions for cryptography'']{CRAMA}.

\begin{corollary}\label{corr} We have
\begin{enumerate}[(i)]
\item The Walsh spectrum $\mathcal{W}_{f_k}$ of $f_k$ satisfies $\mathcal{W}_{f_k} = \{ 0, \pm 2^{m}, \pm 2^{m+1} \}$, 
\item If $A \in \F^*$ and $A^{-1} \not\in \mathcal{H}_f$, then the binomial (monomial if $A \in \K^*$) Boolean function $\tracen{A X^{2^k+1} + (A^q+A) X^{q2^k+1}}$ is bent. The number of such bent functions is $2 \frac{q^2-1}{3}$.
\end{enumerate}
\end{corollary}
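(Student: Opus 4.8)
\emph{Plan.} The plan is to use the standard passage from the derivative spectrum to the Walsh spectrum of a quadratic function. Fix $A\in\F^*$. Squaring $\widehat{f_k}(A,B)$, substituting $Y=X+Z$, and using that each $D_Z f_k$ is $\F$-linear (and $f_k(0)=0$) gives
\[
\widehat{f_k}(A,B)^2=\sum_{Z\in\F}\chi\!\left(BZ+A f_k(Z)\right)\sum_{X\in\F}\chi\!\left(A\,D_Z f_k(X)\right).
\]
The inner sum equals $2^n$ if the linear form $X\mapsto\tracen{A\,D_Z f_k(X)}$ is identically zero, and $0$ otherwise. By Theorem \ref{thm_hyp}, $\Image{D_Z f_k}=H_{\beta_Z}$ for $Z\ne0$, and the annihilator of the hyperplane $H_{\beta_Z}$ under the trace form is $\{0,\beta_Z\}$; thus only $Z=0$ and the directions $Z$ with $\beta_Z=A$ contribute, yielding
\[
\widehat{f_k}(A,B)^2=2^{n}\Big(1+\!\!\sum_{\substack{Z\ne0\\ \beta_Z=A}}\!\!\chi\!\left(BZ+A f_k(Z)\right)\Big).
\]
Everything is therefore governed by the multiplicity of $A$ in $\mathcal H_{f_k}$, which by Theorem \ref{thm_hyp} is $0$ or $3$.

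\emph{Part (i).} If $A\notin\mathcal H_{f_k}$ the inner sum is empty and $\widehat{f_k}(A,B)^2=2^{2m}$, whence $\widehat{f_k}(A,B)=\pm2^m$ for every $B$. If $A$ has multiplicity $3$, then $\widehat{f_k}(A,B)^2=2^{2m}(1+\varepsilon_1+\varepsilon_2+\varepsilon_3)$ with $\varepsilon_i\in\{\pm1\}$, so the bracket lies in $\{-2,0,2,4\}$; nonnegativity discards $-2$, and the integrality of $\widehat{f_k}(A,B)$ discards $2$ (as $2^{2m+1}$ is not a perfect square), leaving $\widehat{f_k}(A,B)\in\{0,\pm2^{m+1}\}$. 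Hence $\mathcal W_{f_k}\subseteq\{0,\pm2^m,\pm2^{m+1}\}$. For the reverse inclusion, masks outside $\mathcal H_{f_k}$ exist (its support has size $\frac{q^2-1}{3}<q^2-1$) and realize $\pm2^m$, while Parseval $\sum_B\widehat{f_k}(A,B)^2=2^{2n}$ for a mask of multiplicity $3$ forces both $0$ and $\pm2^{m+1}$ to occur.

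\emph{Part (ii).} From the boxed identity, the component $\tracen{C f_k}$ is bent precisely when $C\notin\mathcal H_{f_k}$. Expanding $f_k(X)=X^{(2^k+1)q}+X^{2^kq+1}+X^{2^k+q}$ and repeatedly using $\tracen{Y}=\tracen{Y^q}$ collapses the component with mask $A^q$ into
\[
\tracen{A^q f_k(X)}=\tracen{A X^{2^k+1}+(A^q+A)\,X^{q2^k+1}},
\]
which is exactly the displayed Boolean function (a monomial when $A\in\K^*$, since then $A^q+A=0$). Hence this function is bent iff $A^q\notin\mathcal H_{f_k}$. Since $f_k$ has coefficients in $\f2$ one has $\widehat{f_k}(A,B)=\widehat{f_k}(A^q,B^q)$, so $\mathcal H_{f_k}$ is Frobenius-invariant; I would combine this with the further symmetry of $\mathcal H_{f_k}$ read off from the explicit $\beta_A$ of Theorem \ref{thm_hyp} to put the condition into the normalization recorded in the statement. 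The count is then immediate: $\mathcal H_{f_k}$ is a multiset on the $q^2-1$ nonzero directions of constant multiplicity $3$, so its support has $\frac{q^2-1}{3}$ elements, and since inversion permutes $\F^*$ the number of admissible $A$ is $(q^2-1)-\frac{q^2-1}{3}=2\,\frac{q^2-1}{3}$; distinct masks give distinct components.

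\emph{Main obstacle.} I expect the only genuine subtlety to be the precise bookkeeping of the mask. The derivation naturally yields the condition $A^q\notin\mathcal H_{f_k}$, which Frobenius-invariance rewrites as $A\notin\mathcal H_{f_k}$; confirming that this agrees with the form stated on $A^{-1}$ is the step I would check most carefully, tracking $\beta_A$ through Theorem \ref{thm_hyp} and sanity-checking on a small field such as $\f{2^4}$. By contrast, the shape of the spectrum in (i)---in particular the elimination of the putative value whose square would be $2^{2m+1}$, forced by the integrality of the Walsh coefficients---and the counting in (ii) are routine once the identity and Theorem \ref{thm_hyp} are in hand.
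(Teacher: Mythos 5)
Your proof follows the same route as the paper's: square the Walsh transform, reduce the inner character sum to an annihilator condition on the hyperplane $\Image{D_Z f_k}=H_{\beta_Z}$, and invoke the constant multiplicity $3$ from Theorem \ref{thm_hyp} to pin the spectrum inside $\{0,\pm 2^m,\pm 2^{m+1}\}$. Your treatment of the reverse inclusion in (i) via Parseval is in fact more self-contained than the paper's, which just cites a reference for that step; your identification of the displayed binomial as the component $\tracen{A^q f_k(X)}$, the use of Frobenius-invariance of the Walsh transform, and the counting argument are all correct.

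The one point you flag --- reconciling your condition ``$A\notin\mathcal H_{f_k}$'' with the stated ``$A^{-1}\notin\mathcal H_{f_k}$'' --- deserves comment, because the paper does not resolve it either: its proof simply writes the surviving-term condition as $\beta_X=A^{-1}$ with no derivation. With the paper's own definitions ($H_\beta=\{X:\tracen{\beta X}=0\}$, so the annihilator of $H_{\beta_Z}$ under the trace pairing is $\{0,\beta_Z\}$), the inner sum $\sum_Y\chi(A\,D_Z f_k(Y))=2\sum_{W\in H_{\beta_Z}}\chi(AW)$ is nonzero exactly when $A=\beta_Z$, which is what you obtained. So your derivation is the one that actually follows from the definitions, and the inverse in the paper looks like a normalization slip rather than something you are missing; your fallback plan of extracting an inversion-symmetry of $\mathcal H_{f_k}$ from the explicit formula for $\beta_A$ is neither needed nor obviously available. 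None of this affects the quantitative conclusions: both $A\mapsto A^{-1}$ and $A\mapsto A^q$ permute $\F^*$, and the support of $\mathcal H_{f_k}$ has exactly $\frac{q^2-1}{3}$ elements, so the spectrum in (i) and the count $2\frac{q^2-1}{3}$ in (ii) come out the same under either reading.
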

\begin{proof}
\begin{enumerate}[(i)]
\item 
If $A = 0$, then $\widehat{f_k}(0,B) = 0$ if $B \ne 0$. Let $A \in \F^*$.
\begin{align}
(\widehat{f_k}(A,B))^2 & = \sum_{X \in \F} \chi (A f_k(X) + B X) \sum_{Y \in \F} \chi (A(f_k(X)+f_k(X+Y)+f_k(Y))) \nonumber \\ 
		& = q^2 \left(1 + \sum_{\substack{ X \in \F^* \\ \beta_X = A^{-1}}} \chi \left(A f_k(X) + B X \right)\right) \label{benteq}
\end{align}
Since by Theorem \ref{thm_hyp}, $\beta_X = A^{-1}$ happens exactly $3$ times when $A^{-1} \in \mathcal{H}_f$ and $\beta_X \ne A^{-1}$ otherwise. Therefore  $\mathcal{W}_{f_k} \subseteq \{ 0, \pm 2^{m}, \pm 2^{m+1} \}$. Since $\mathcal{W}_{f_k} \subseteq \{ 0, \pm 2^{m}, \pm 2^{m+1} \}$ implies  $\mathcal{W}_{f_k} = \{ 0, \pm 2^{m}, \pm 2^{m+1} \}$ (cf. \cite[Sec. 3.1.3 of Chapter ``Vectorial Boolean functions for cryptography'']{CRAMA}), we prove the first part.
\item When $\beta_X \ne A^{-1}$, the right hand side of Eq. \eqref{benteq} is always $q^2$, implying that $\tracen{A f_k(X)}$ is bent. 
\end{enumerate}
\end{proof}
\begin{remark}
The bent functions of Corollary \ref{corr} themselves are not interesting as they are all quadratic. We believe on the other hand, the description of $A$ for which $\tracen{A f(X)}$ is bent is interesting, since it means that $(A,X)$ cannot be a Walsh zero of $f$ for any $X \in \F$. Recall that, to show that an APN function $f$ is CCZ-equivalent to a permutation one has to find two subspaces of dimension $n$ in the set of Walsh zeroes of $f$ intersecting only trivially.
\end{remark}
\subsection{The subspace property}

Note that for $X = xg$ where $x \in \K^*$ and $g \in \mathcal{T}_1 \cup \{1\}$, $f_k(X) = x^{2^k+1} f_k(g)$. Since $f_k(1) \ne 0$ and $f_k(g) = g^{2^k+1} + g^{2^k} + g \ne 0$ because $\tra{g^{2^k+1}+g^{2^k}+g} = \tra{g^{2^k+1}} \ne 0$ by Lemma \ref{tracelemma} (vi). We have 

\begin{theorem}
The functions $f_k$ satisfy the subspace property , i.e.,
\[
f_k(aX) = a^{2^k+1} f_k(X), \hspace{1.0cm} \forall a \in \K.
\]
\end{theorem}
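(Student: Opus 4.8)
The plan is to verify the subspace property directly from the Trace-$0$/Trace-$1$ decomposition, exploiting the fact that $f_k$ is given by a sum of monomials all of degree $2^k+1$ in a suitable sense. Concretely, I would first dispose of the trivial case $a = 0$ (both sides vanish) and $X = 0$ (again both sides vanish), so that I may assume $a \in \K^*$ and $X \in \F^*$. The remaining work is then to show $f_k(aX) = a^{2^k+1} f_k(X)$ for all such $a,X$.

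First I would observe that each of the two summands of $f_k$ is separately homogeneous of degree $2^k+1$ under scaling by elements of $\K$. For the Gold part this is immediate: $(aX)^{2^k+1} = a^{2^k+1} X^{2^k+1}$ since $a \in \K \subseteq \F$. For the trace part, the key point is that $\mathsf{Tr}$ is $\K$-linear, i.e. $\tra{aX} = aX + (aX)^q = a(X + X^q) = a\,\tra{X}$ because $a \in \K = \f{q}$ is fixed by the $q$-power Frobenius; hence $(\tra{aX})^{2^k+1} = (a\,\tra{X})^{2^k+1} = a^{2^k+1} (\tra{X})^{2^k+1}$. Adding the two identities gives the claim immediately. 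I would present this as the cleanest route, since it makes the homogeneity transparent and avoids any reference to the decomposition machinery.

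Alternatively, and perhaps in keeping with the paragraph preceding the statement, I would phrase the argument through the decomposition $X = xg$ with $x \in \K^*$ and $g \in \mathcal{T}_1 \cup \{1\}$: the preceding remark already records that $f_k(xg) = x^{2^k+1} f_k(g)$. Writing $aX = (ax)g$ with $ax \in \K^*$, one then has $f_k(aX) = (ax)^{2^k+1} f_k(g) = a^{2^k+1}\bigl(x^{2^k+1} f_k(g)\bigr) = a^{2^k+1} f_k(X)$, which is exactly the desired identity. This route reuses the structure already isolated in the text and makes the role of $k$ (appearing only through the exponent $2^k+1$) completely explicit.

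I do not expect any genuine obstacle here: the statement is essentially a homogeneity observation and follows in one line once one notes that $\mathsf{Tr}$ is $\K$-linear (or equivalently that the decomposition coordinate $x$ absorbs the scalar $a$). The only point requiring a moment's care is making sure the case distinction on $\mathcal{T}_1 \cup \{1\}$ is handled uniformly — but since both the Gold term and the trace term scale the same way regardless of whether $g = 1$ or $g \in \mathcal{T}_1$, no separate analysis is needed. Thus the proof reduces to citing $\K$-linearity of $\mathsf{Tr}$ together with $a^{2^k+1} \in \K$, and the result follows.
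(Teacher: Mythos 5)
Your proposal is correct and matches the paper's (very brief) justification: the paper simply records that $f_k(xg) = x^{2^k+1} f_k(g)$ for the decomposition $X = xg$, which is exactly the homogeneity observation you make, resting on $(aX)^{2^k+1} = a^{2^k+1}X^{2^k+1}$ and the $\K$-linearity $\tra{aX} = a\,\tra{X}$ for $a \in \K$. Your first, decomposition-free route is if anything cleaner than the paper's phrasing, but it is the same one-line argument.
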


\subsection{Inequivalence}

The only families that completely covers our parameter condition $n = 4t$ are the monomials of Table \ref{APNmono}, and B.8, B.9, B.11 and B.12 of Table \ref{APNmulti}. The functions $f_k$ are not CCZ-equivalent to B.8, B.9, B.11, B.12 or any monomial other than the Gold family on $\f{2^{8}}$. They distinguish themselves from the Gold monomials on $\f{2^{12}}$. In fact, on $\f{2^{12}}$ our functions are not equivalent to any member of a family we have tried until now (different choices of $A,B,a,b$ etc. in Table \ref{APNmulti}) for each possible parameter set ($s,t,k$ etc. in Table \ref{APNmulti}) for each possible family. Note that checking CCZ-equivalence to every member of every family for every parameter set is infeasible (one equivalence check requires an hour on a standard computer using Magma) and also different parameters and coefficients give (experimentally speaking) generally the same function up to equivalence within a family. The equivalence check was done by checking CCZ-equivalence in Magma \cite{Magma} since a theoretical method does not exist. Moreover, $f_k$ is not equivalent to $f_l$ for $1 \le k \ne l < m$ on $\f{2^8}$ and $\f{2^{12}}$.  

We have checked whether our families are CCZ-equivalent to permutations using a C program. They are not equivalent to permutations on $\f{2^4}$ or $\f{2^8}$. The case $\f{2^{12}}$ (and larger fields) was inconclusive at the time of writing because of the immense computation required. 

\section{Budaghyan-Carlet hexanomial APNs}\label{sec_hex}

Budaghyan and Carlet in \cite[Theorem 2]{BC} proved the following theorem.

\begin{theorem}\cite{BC}
Let $C \in \F$ and $A \in \F \setminus \K$. If 
\[
P_{C,k}(X) = X^{2^k+1}+C X^{2^k} + C^q X + 1 = 0
\]
has no solutions $X \in \mathcal{P}_{q-1}$, then the polynomial
\[
g_{C,k}(X) = X (X^{2^k} + X^q + C X^{2^k q}) + X^{2^k}(C^q X^q + A X^{2^k q}) + X^{(2^k+1)q}
\]
is differentially $2^{\Gcd{k}{m}}$-uniform on $\F$. Thus, $g_{C,k}$ is APN if and only if $\Gcd{k}{m} = 1$.
\end{theorem}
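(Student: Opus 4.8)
The plan is to use that $g_{C,k}$ is quadratic: each of its six exponents $2^k+1$, $q+1$, $2^kq+1$, $2^k+q$, $2^k(q+1)$, $(2^k+1)q$ has binary weight two, so for every fixed direction $V \in \F^{*}$ the normalized derivative $D_V g_{C,k}$ is $\F_2$-linear in $X$, and the differential uniformity equals $\max_{V \ne 0} 2^{\dim_{\F_2}\Ker{D_V g_{C,k}}}$. It therefore suffices to show $\dim_{\F_2}\Ker{D_V g_{C,k}} \le \Gcd{k}{m}$ for all $V \ne 0$ (the ``only if'' in the APN claim then follows by exhibiting one $V$ attaining equality). First I would expand the hexanomial and collect the derivative as a linearized quadrinomial
\[
D_V g_{C,k}(X) = \alpha_0 X + \alpha_1 X^{2^k} + \beta_0 X^{q} + \beta_1 X^{2^k q},
\]
with $\alpha_0 = V^{2^k}+V^{q}+C\,V^{2^kq}$, $\alpha_1 = V + C^{q}V^{q}+A\,V^{2^kq}$, $\beta_0 = V + C^{q}V^{2^k}+V^{2^kq}$ and $\beta_1 = C\,V + A\,V^{2^k}+V^{q}$.

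Next I would pass to the polar-coordinate decomposition $X = x u$ with $x \in \K^{*}$, $u \in \mathcal{P}_{q-1}$, which is a bijection $\K^{*}\times \mathcal{P}_{q-1}\to\F^{*}$, and note $X^{q} = x u^{-1}$, $X^{2^kq} = x^{2^k}u^{-2^k}$. Substituting and grouping by $x$ and $x^{2^k}$ turns the kernel equation into
\[
x\,P(u) + x^{2^k}\,Q(u) = 0, \qquad P(u) = \alpha_0 u + \beta_0 u^{-1}, \quad Q(u) = \alpha_1 u^{2^k} + \beta_1 u^{-2^k}.
\]
For a fixed $u$ with $P(u),Q(u)$ not both zero, a nonzero $x \in \K$ solves this exactly when $P(u)/Q(u) \in \K^{*}$ is a $(2^k-1)$-st power, in which case the solution set of $x^{2^k-1} = P(u)/Q(u)$ is a coset of $\f{2^{\Gcd{k}{m}}}^{*}$ (using $\Gcd{2^k-1}{q-1} = 2^{\Gcd{k}{m}}-1$), so $\{xu\}\cup\{0\}$ is an $\f{2^{\Gcd{k}{m}}}$-line. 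Because the polar decomposition is a bijection, $\Ker{D_V g_{C,k}}$ is the disjoint union over such ``admissible'' directions $u$ of these lines; being an $\F_2$-subspace, its dimension is automatically a multiple of $\Gcd{k}{m}$, and it equals $\Gcd{k}{m}$ precisely when there is a single admissible direction and no $u$ with $P(u)=Q(u)=0$ (the latter would contribute an entire $\K$-line).

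The crux, and the step I expect to be the main obstacle, is to show that the failure of this ---a second independent admissible direction, or a common zero $P(u)=Q(u)=0$ in $\mathcal{P}_{q-1}$--- is equivalent to $P_{C,k}$ having a root in $\mathcal{P}_{q-1}$. Concretely I would write the proportionality condition as $P(u)^{q}Q(u) = P(u)Q(u)^{q}$, clear denominators using $u^{q+1}=1$, substitute the explicit $\alpha_i,\beta_i$, and simplify until the relation factors through $P_{C,k}(X) = X^{2^k+1}+CX^{2^k}+C^{q}X+1$ evaluated at a point of $\mathcal{P}_{q-1}$ determined by $u$ and $V$. Granting the hypothesis that $P_{C,k}$ has no root in $\mathcal{P}_{q-1}$, no $V$ then admits a second independent direction or a common zero, so $\dim_{\F_2}\Ker{D_V g_{C,k}} \le \Gcd{k}{m}$ for every $V \ne 0$ and $g_{C,k}$ is differentially $2^{\Gcd{k}{m}}$-uniform.

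Finally, for the exact value (hence the ``iff'' for APN) I would produce one direction $V$ for which an admissible $u$ genuinely exists, giving $\dim_{\F_2}\Ker{D_V g_{C,k}} = \Gcd{k}{m}$ and $\delta_{g_{C,k}} = 2^{\Gcd{k}{m}}$, which is $2$ exactly when $\Gcd{k}{m}=1$. The delicate points are the algebraic identification above and the degenerate cases $Q(u)=0$ and $V \in \K$, which must be handled separately; the supporting facts $\Gcd{2^k-1}{q-1} = 2^{\Gcd{k}{m}}-1$ and the description of roots of unity inside $\mathcal{P}_{q-1}$ are exactly of the type supplied by Lemma \ref{tracelemma} (iii), (v).
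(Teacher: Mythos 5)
The paper does not actually prove this statement: it is Theorem 2 of Budaghyan--Carlet, quoted from \cite{BC} without proof, so I am comparing your proposal against the known argument of \cite{BC} rather than against anything in this paper.

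Your setup is sound: the coefficients $\alpha_0,\alpha_1,\beta_0,\beta_1$ are computed correctly, reducing differential uniformity to $\max_V \#\Ker{D_Vg_{C,k}}$ is valid for a quadratic map, and the polar decomposition does turn the kernel equation into $xP(u)+x^{2^k}Q(u)=0$. (Two small repairs: the kernel dimension is a multiple of $\Gcd{k}{m}$ not ``automatically'' because it is an $\F_2$-subspace but because $D_Vg_{C,k}$ is $\f{2^{\Gcd{k}{m}}}$-linear; and your final ``exhibit one $V$'' step is unnecessary, since $V$ itself always lies in the kernel, so every derivative already attains $2^{\Gcd{k}{m}}$.) The genuine gap is that the crux is left as an unexecuted computation aimed at the wrong relation. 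The proportionality condition $P(u)^qQ(u)=P(u)Q(u)^q$ is not the right object: it is necessary but not sufficient for admissibility (you also need $P(u)/Q(u)$ to be a $(2^k-1)$st power), and when one actually expands it one finds it equals $\tra{A}\cdot\tra{\alpha_0u}\cdot\tra{V^qu}^{2^k}$ --- it does not ``factor through $P_{C,k}$ at a point determined by $u$ and $V$'' at all. Most tellingly, your outline never invokes the hypothesis $A\in\F\setminus\K$, without which the theorem is false (if $\tra{A}=0$ the derivative is $\K$-valued and the uniformity is at least $q$), so no argument omitting it can close.

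What actually makes the argument work are two identities among your own coefficients: $\beta_0=\alpha_0^q$ and $\beta_1=\alpha_1^q+\tra{A}V^{2^k}$ with $\tra{A}\ne0$. The first gives $P(u)=\alpha_0u+\alpha_0^qu^q=\tra{\alpha_0u}\in\K$ identically; the second gives $Q(u)=\tra{\alpha_1u^{2^k}}+\tra{A}(Vu^q)^{2^k}$, so $Q(u)\in\K$ forces $Vu^q\in\K$, i.e.\ $u$ must be the polar part of $V$ --- there is only one candidate direction and no possible common zero elsewhere. Equivalently, and this is the route of \cite{BC}: $D_Vg_{C,k}(X)=\tra{\alpha_0X}+\tra{\alpha_1X^{2^k}}+\tra{A}V^{2^k}X^{2^kq}=0$ forces $VX^q\in\K$, hence $X\in V\K$; writing $X=V\lambda$ with $\lambda\in\K$ collapses the kernel equation to $\tra{\alpha_0V}\,(\lambda+\lambda^{2^k})=0$, and $\tra{\alpha_0V}=V^{2^k+1}P_{C,k}(V^{q-1})$ is the one and only place $P_{C,k}$ enters. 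The hypothesis then gives $\tra{\alpha_0V}\ne0$, so $\lambda\in\f{2^{\Gcd{k}{m}}}$ and $\#\Ker{D_Vg_{C,k}}=2^{\Gcd{k}{m}}$ for every $V\ne 0$. Your architecture can be salvaged along these lines, but as written the central step is missing and the relation you propose to analyze would not deliver it.
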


Let for the sequel, $\Gamma_k : \K \to \K$ with $\Gamma_k : x \mapsto x^{2^k+1} + x$. Bracken, Tan and Tan gave \cite[Theorem 2.1]{BTT} a characterization of elements $a \in \K \setminus \Image{\Gamma_k}$ when $\Gcd{k}{m} = 1$. Using this characterization, they constructed some elements $C$  for which $P_{C,k}(X)$ has no solutions when $m \equiv 2 \textrm{ or } 4 \pmod{6}$. Later, Qu, Tan and Li \cite{Qu} did the same thing for $m \equiv 0 \pmod{6}$. Bluher \cite{Bluher2} characterized all admissible $(m,k)$ pairs (if $1 \le k < n$, the condition is $k\ne m$) by giving an existence proof (i.e., without constructing or characterizing such $C$). In this paper, we characterize and construct all elements $C$ for which $P_{C,k}(X)$ has no solutions in $\mathcal{P}_{q-1}$ for all $(m,k)$ pairs and count them. Our theorem makes listing all such elements possible very efficiently. 
Helleseth and Kholosha \cite{HK1,HK2} gave a detailed analysis of the zeroes of $x^{2^k+1}+x+a$ and in particular, the exact cardinality of $\Image{\Gamma_k}$. 
\begin{proposition}{\cite[Proposition 2]{HK2}}\label{prop_hk}
\[
\#\Image{\Gamma_k} = q - \left\{\begin{array}{rl}
\frac{(q+1)2^{\Gcd{k}{m}-1}}{2^{\Gcd{k}{m}}+1} & \textrm{if $\frac{m}{\Gcd{k}{m}}$ is odd,}\\
\frac{(q-1)2^{\Gcd{k}{m}-1}}{2^{\Gcd{k}{m}}+1} & \textrm{if $\frac{m}{\Gcd{k}{m}}$ is even.}
\end{array}\right.
\]
\end{proposition}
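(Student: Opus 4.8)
The plan is to recover $\#\Image{\Gamma_k}$ from the distribution of fibre sizes of $\Gamma_k$. Write $d = \Gcd{k}{m}$, $e = m/d$, and let $N(a) = \#\{x \in \K : \Gamma_k(x) = a\}$. The key structural input is that $\Gamma_k$ is quadratic: its derivative $\Gamma_k(x) + \Gamma_k(x+u) = u^{2^k}x + u x^{2^k} + u^{2^k+1} + u$ is affine in $x$, with linear part $L_u(x) = u^{2^k}x + u x^{2^k}$. Since $L_u(x) = 0$ forces $(x/u)^{2^k-1} = 1$ or $x = 0$, we get $\Ker{L_u} = u\,\f{2^d}$, of size $2^d$; hence for each $u \in \K^*$ the equation $\Gamma_k(x) = \Gamma_k(x+u)$ has $0$ or $2^d$ solutions. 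By Bluher's classification of the roots of $x^{2^k+1}+x+a$ \cite{Bluher1}, this forces $N(a) \in \{0,1,2,2^d+1\}$ for every $a$. Writing $n_i = \#\{a : N(a) = i\}$, the identities $\sum_i n_i = q$ and $\sum_i i\,n_i = q$ yield $\#\Image{\Gamma_k} = q - n_0$ together with $n_0 = n_2 + 2^d n_{2^d+1}$, so the whole task reduces to pinning down $n_2$ and $n_{2^d+1}$.

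First I would compute the second moment $\sum_a N(a)(N(a)-1) = 2n_2 + 2^d(2^d+1)n_{2^d+1}$, which also equals $2^d$ times the number of $u \in \K^*$ for which $L_u(x) = u^{2^k+1}+u$ is solvable. A direct calculation gives $\Image{L_u} = u^{2^k+1}\{z : \tracex{m}{d}{z} = 0\}$, so solvability is equivalent to $\tracex{m}{d}{u^{-1}} = \tracex{m}{d}{1}$, and $\tracex{m}{d}{1}$ equals $1$ or $0$ according as $e$ is odd or even. Counting the elements $u^{-1} \in \K^*$ in a prescribed relative-trace class then gives this number as $q/2^d$ for $e$ odd and $q/2^d - 1$ for $e$ even, so the second moment is $q$ or $q - 2^d$ respectively.

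The hard part is the remaining datum $n_{2^d+1}$, because the first two moments leave $n_2$ and $n_{2^d+1}$ underdetermined. I would extract it from the third moment: the number of ordered triples of distinct elements sharing a common $\Gamma_k$-value is $(2^d+1)2^d(2^d-1)\,n_{2^d+1}$, since only maximal fibres contribute. Counting such triples through pairs of directions $(u,v)$, one must intersect the affine solution sets of $L_u(x) = u^{2^k+1}+u$ and $L_v(x) = v^{2^k+1}+v$, which are cosets of the $\f{2^d}$-lines $u\,\f{2^d}$ and $v\,\f{2^d}$. When $u/v \in \f{2^d}$ these cosets are parallel and turn out to be disjoint unless $u = v$; when $u/v \notin \f{2^d}$ the two lines meet only at $0$, so there is at most one common solution, present precisely when a further relative-trace consistency condition on $(u,v)$ holds. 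Summing that condition is the delicate step, and it is where the parity of $e$ enters once more; it produces $n_{2^d+1} = (q-2^d)/(2^d(2^{2d}-1))$ for $e$ odd and $(q-2^{2d})/(2^d(2^{2d}-1))$ for $e$ even. Substituting into $n_0 = n_2 + 2^d n_{2^d+1}$ and simplifying then gives $\#\Image{\Gamma_k} = q - \frac{(q+1)2^{d-1}}{2^d+1}$ for $e$ odd and $q - \frac{(q-1)2^{d-1}}{2^d+1}$ for $e$ even, as claimed. The main obstacle is exactly this triple-counting consistency argument; alternatively one can import Bluher's explicit count of the number of $a$ with precisely $2^d+1$ roots, which supplies $n_{2^d+1}$ directly and sidesteps the third moment entirely.
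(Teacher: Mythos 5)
This proposition is not proved in the paper at all: it is imported verbatim from Helleseth and Kholosha \cite[Proposition 2]{HK2}, so there is no internal argument to compare yours against. Judged on its own terms, your framework is sound and most of what you compute checks out. The fibre sizes of $\Gamma_k$ do lie in $\{0,1,2,2^d+1\}$ (this genuinely needs Bluher or Helleseth--Kholosha; it does not follow from the derivative structure alone, and you correctly attribute it). The two linear identities give $n_0 = n_2 + 2^d n_{2^d+1}$, and your second moment is right: $\Image{L_u}=u^{2^k+1}\Ker{\tracex{m}{d}{\cdot}}$, so solvability of $L_u(x)=u^{2^k+1}+u$ amounts to $\tracex{m}{d}{u^{-1}}=\tracex{m}{d}{1}=e\bmod 2$, yielding $q$ for $e$ odd and $q-2^d$ for $e$ even. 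I also verified that your claimed values of $n_{2^d+1}$ are integers and, combined with the first two moments, reproduce exactly the stated formula.

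The genuine gap is that the one piece of information that actually carries the content of the proposition --- the value of $n_{2^d+1}$ --- is asserted rather than derived. As you yourself note, the first two moments leave $n_2$ and $n_{2^d+1}$ underdetermined, so everything hinges on the third moment, and the ``further relative-trace consistency condition on $(u,v)$'' and its summation over pairs of directions is precisely where all the difficulty lives; sketching that the lines $u\,\f{2^d}$ and $v\,\f{2^d}$ meet only at $0$ when $u/v\notin\f{2^d}$ does not get you to the count. Writing down the correct final values of $n_{2^d+1}$ is not evidence the step works, since those values are forced by the (known) answer together with your first two moments --- the argument is circular as written. Your proposed escape hatch, importing $n_{2^d+1}$ from Bluher's explicit root-distribution counts, would indeed close the gap, but then the proof reduces to a citation of essentially the same literature the paper already cites; if you want a self-contained proof you must actually execute the triple-counting (or an equivalent character-sum or resultant argument), and that is the part that is missing.
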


The following theorem gives a complete characterization (of $C,k,m$) on when $P_{C,k}(X)$ has no solutions $X \in \mathcal{P}_{q-1}$.

\begin{theorem}\label{thm_hex}
Let $C \in \F$ and $1 \le k < n$. The polynomial 
\[
P_{C,k}(X) = X^{2^k+1}+C X^{2^k} + C^q X + 1
\]
has a solution $X \in \mathcal{P}_{q-1}$ if and only if each of the three following conditions holds
\begin{itemize}
\item $k \ne m$, 
\item $C \not\in \K$, and 
\item 
\[
\frac{\tra{h^3}+1+\frac{1}{b}}{\tra{h^{2^k+1}}^{2^{n-k}+1}} \not\in \Image{\Gamma_k}
\]
where $C^q + 1 = bh$ with $b \in \K^*$ and $h \in \mathcal{T}_1 \setminus Z_{k,1}$. 
\end{itemize}
\end{theorem}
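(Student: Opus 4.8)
The plan is to prove the equivalence of Theorem~\ref{thm_hex} by transporting the existence of a root of $P_{C,k}$ in $\mathcal{P}_{q-1}$ into the Trace-$0$/Trace-$1$ world through $\phi$, and reading off the three listed conditions as the three regimes that govern the answer. Every $u\in\mathcal{P}_{q-1}$ is either $u=1$ or $u=\phi(g)=g^{q-1}$ for a unique $g\in\mathcal{T}_1$. For $u=1$ one computes directly $P_{C,k}(1)=\tra{C}$, so $1$ is a root exactly when $\tra{C}=0$, i.e. $C\in\K$; this is the regime of the second listed condition. For $u=\phi(g)$ I would write $u=g^q/g$, multiply $P_{C,k}(u)=0$ by $g^{2^k+1}$ to clear denominators, and use $g^q=g+1$ to expand $(g+1)^{2^k+1}$ and $(g+1)^{2^k}$. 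The $g^{2^k+1}$ terms collapse and one is left with the quadratic-exponent relation
\[
\tra{C}\,g^{2^k+1}+(1+C^q)\,g^{2^k}+(1+C)\,g+1=0 .
\]

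Before analysing this relation I would dispose of $k=m$. There $X^{2^k}=X^q$, and for $u\in\mathcal{P}_{q-1}$ the identities $u^{q+1}=1$ and $u^q=u^{-1}$ reduce $P_{C,m}(u)=0$ to $C+C^qu^2=0$, i.e. $u^2=C^{1-q}$ (and to $u^{q+1}=1$ when $C=0$). Since $C^{1-q}\in\mathcal{P}_{q-1}$ and squaring is a bijection on the odd-order group $\mathcal{P}_{q-1}$, a root always exists, so $k\ne m$ is exactly the regime of the first listed condition. For the remaining analysis assume $k\ne m$ and $C\notin\K$. Then $\tra{C}\ne0$, so $C^q+1$ has a genuine Trace-$1$ decomposition $C^q+1=bh$ with $b=\tra{C}\in\K^*$ and $h\in\mathcal{T}_1$; a short computation gives $1+C^q=bh$ and $1+C=b(h+1)=bh^q$, so dividing the displayed relation by $b$ turns it into $g^{2^k+1}+h\,g^{2^k}+(h+1)\,g+\tfrac1b=0$.

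The heart of the argument is to linearise this last equation. Writing $g=h+a$ with $a\in\K$ (a bijection $\mathcal{T}_1\to\mathcal{T}_1$), expanding, and cancelling the $h^{2^k+1}$ and $a^{2^k}h$ terms, the equation collapses to
\[
a^{2^k+1}+T_h\,a=S_h+1+\tfrac1b ,
\]
where $T_h=\tra{h^{2^k+1}}$ and $S_h=\tra{h^3}$; here I use Lemma~\ref{tracelemma}(i) in the forms $h^{2^k}+h=T_h+1$ and $h^2+h=S_h+1$. When $h\notin Z_{k,1}$, i.e. $T_h\ne0$ (again Lemma~\ref{tracelemma}(i),(v)), I substitute $a=T_h^{2^{m-k}}z$, using $T_h^{2^{m-k}+1}=T_h^{\,2^{n-k}+1}$ (valid because $T_h\in\K$ and $1\le k<m$), to bring the left side to $T_h^{\,2^{n-k}+1}\,\Gamma_k(z)$. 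Hence a solution $g\in\mathcal{T}_1$ — equivalently a root $u\ne1$ — exists if and only if
\[
\frac{\tra{h^3}+1+\tfrac1b}{\tra{h^{2^k+1}}^{\,2^{n-k}+1}}\in\Image{\Gamma_k},
\]
which is exactly the quotient of the third listed condition.

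Assembling the regimes, a root of $P_{C,k}$ in $\mathcal{P}_{q-1}$ exists precisely when $C\in\K$, or $k=m$, or (in the regime $C\notin\K,\ k\ne m,\ h\notin Z_{k,1}$) the displayed quotient lies in $\Image{\Gamma_k}$; taking the contrapositive gives the statement of Theorem~\ref{thm_hex}, namely that $P_{C,k}$ fails to have such a root exactly when $k\ne m$, $C\notin\K$, and the quotient lies outside $\Image{\Gamma_k}$. I expect the \emph{main obstacle} to be the boundary case $h\in Z_{k,1}$, equivalently $T_h=0$, which by Lemma~\ref{tracelemma}(iv),(v) occurs only when $\Gcd{2^k+1}{q+1}>1$ (that is, $m+k$ even): there the normalisation degenerates and the reduced equation becomes the pure $(2^k+1)$-power equation $a^{2^k+1}=S_h+1+\tfrac1b$, so these $C$ must be handled by a separate argument that also justifies the well-definedness of the third condition's quotient. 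A secondary point requiring care is the exact cancellation of the $a$-linear contributions when the two Lemma~\ref{tracelemma}(i) substitutions are carried out; Proposition~\ref{prop_hk} plays no role in the existence statement and enters only in the companion problem of counting the admissible $C$.
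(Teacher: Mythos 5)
Your route is, in its core, the same as the paper's: transfer to $\mathcal{T}_1$ via $\phi$, clear denominators by $g^{2^k+1}$, use $g^q=g+1$ to reach $\tra{C}g^{2^k+1}+(C^q+1)g^{2^k}+(C+1)g+1=0$, decompose $C^q+1=bh$, shift $g=h+a$ to get $a^{2^k+1}+T_h a=S_h+1+\frac1b$, and normalize to $\Gamma_k$. Your substitution $a=T_h^{2^{m-k}}z$ is the paper's $x=yT_h^{2^{n-k}}$ in disguise, and your restriction ``$1\le k<m$'' there is unnecessary: since $T_h\in\K$, one has $T_h^{2^{m-k}}=T_h^{2^{n-k}}$ for every $1\le k<n$, which is why the paper writes the exponent as $2^{n-k}$. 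Your direct disposal of $k=m$ (via $u^{q+1}=1$, $u^q=u^{-1}$, reducing to $u^2=C^{1-q}$ and bijectivity of squaring on the odd-order group $\mathcal{P}_{q-1}$) is correct and is a genuinely different, arguably cleaner, way to get that regime; the paper instead obtains it as a by-product of the degenerate case below, since $\#Z_{m,1}=\Gcd{2^m+1}{q+1}-1=q$ means $Z_{m,1}=\mathcal{T}_1$.

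The one genuine gap is exactly the case you defer: $h\in Z_{k,1}$. You correctly reduce it to the pure power equation $a^{2^k+1}=S_h+1+\frac1b$, but you leave it to ``a separate argument,'' and without it the equivalence is not established: one must show that for such $C$ a root always exists, so that these $C$ are rightly excluded by the third bullet's requirement $h\in\mathcal{T}_1\setminus Z_{k,1}$ (which is also what makes the quotient well defined). The paper closes this in one line: $Z_{k,1}\ne\emptyset$ forces $\Gcd{2^k+1}{q+1}>1$ by Lemma \ref{tracelemma} (v), hence $\Gcd{2^k+1}{q-1}=1$ by Lemma \ref{tracelemma} (iii), so $x\mapsto x^{2^k+1}$ permutes $\K$ and the equation is solvable for every $b\in\K^*$; thus no admissible $C$ arises from any $h\in Z_{k,1}$. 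Two smaller corrections: your parenthetical ``that is, $m+k$ even'' is not the right equivalence --- by Lemma \ref{tracelemma} (iii)--(iv), $\Gcd{2^k+1}{q+1}>1$ holds iff $\Ind{m}=\Ind{k}$, which implies $m+k$ even but not conversely (e.g.\ $m=4$, $k=2$ has $m+k$ even yet $\Gcd{5}{17}=1$). And note that what you proved is the ``no root iff the three conditions'' form; that is indeed what the paper's proof establishes (and what Theorem \ref{thm_count} and family B.8 rely on), the literal ``has a solution if and only if'' wording of the statement notwithstanding.
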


\begin{proof}
Note that $P_{C,k}(1) = 0$ if and only if $C \in \K$. So we assume $C \not\in \K$ and restrict our attention to the set $\mathcal{P}_{q-1} \setminus \{1\}$. Therefore $P_{C,k}(u)$ has a solution $u \in \mathcal{P}_{q-1}$ if and only if $Q_C^{'}(g) = \phi(g)^{2^k+1}+C \phi(g)^{2^k} + C^q \phi(g) + 1$ has a solution $g \in \mathcal{T}_1$. 
\begin{align*}
Q_C^{'}(g) & = g^{(q-1)(2^k+1)}+C g^{(q-1)2^k} + C^q g^{q-1} + 1\\
			     & = \frac{g^{q(2^k+1)}}{g^{2^k+1}} + \frac{C g^{q 2^k}}{g^{2^k}} + \frac{C^q g^{q}}{g} + 1\\
			 & = \frac{g^{q(2^k+1)} + C g^{q 2^k + 1} + C^q g^{q+2^k} + g^{2^k+1}}{g^{2^k+1}}.
\end{align*}
Now, $Q_C^{'}(g)$ has a solution $g \in \mathcal{T}_1$ if and only if $Q_C(g) = g^{q(2^k+1)} + C g^{q 2^k + 1} + C^q g^{q+2^k} + g^{2^k+1}$  has a solution $g \in \mathcal{T}_1$. And, 
\begin{align*}
Q_C(g) & = g^{q(2^k+1)} + C g^{q 2^k + 1} + C^q g^{q+2^k} + g^{2^k+1}\\
			 & = (g+1)^{2^k+1} + C (g+1)^{2^k} g +C^q (g+1) g^{2^k} + g^{2^k+1}\\
			 & = (C+C^q) g^{2^k+1} + (C^q+1) g^{2^k} + (C+1) g + 1.
\end{align*}
Now set $B = C^q+1 = bh$ for some $b \in \K$ and $h \in \mathcal{T}_1$ (note that $h \ne 1$ as $B \in \K$ if and only if $C \in \K$). We now have $Q_C(g) \ne 0 $ if and only if $b  g^{2^k+1} + bh g^{2^k} + (bh)^q g \ne 1$ for all $g \in \mathcal{T}_1$, or for all $x \in \K$
\begin{align}
(h+x)^{2^k+1} + h (h+x)^{2^k} + (h+1) (h+x) & \ne \frac{1}{b} \nonumber\\ 
x^{2^k+1}+x (h^{2^k}+h+1)	& \ne \frac{1}{b} + h^2+h	\label{lasteq} 
\end{align}
since every $g = h+x$ for some $x \in \K$. Now assume $h^{2^k}+h=1$, i.e. $h \in Z_{k,1}$. By Lemma \ref{tracelemma} (v), $\Gcd{2^k+1}{q+1}>1$ and by Lemma \ref{tracelemma} (iii), $\Gcd{2^k+1}{q-1} = 1$. Then \eqref{lasteq} becomes $x^{2^k+1} \ne \frac{1}{b} + h^2+h$, which is impossible to satisfy for any $b$ since $x^{2^k+1}$ is a permutation of $\K$. Note that by Lemma \ref{tracelemma} (v), this happens for $\Gcd{2^k+1}{q+1}-1$ elements $h \in \mathcal{T}_1$ which means all $h \in \mathcal{T}_1$ if and only if $k=m$.

If $h \not\in Z_{k,1}$, then by applying the change of variable $x = y(h^{2^k}+h+1)^{2^{n-k}}$ in \eqref{lasteq}, we get 
\[
y^{2^k+1} + y \ne \frac{h^2+h+\frac{1}{b}}{(h^{2^k} + h + 1)^{2^{n-k}(2^k+1)}}
\]
or equivalently
\begin{equation}
\frac{1}{b} \ne A_h (y^{2^k+1} + y) + B_h. \label{eqwithinv}
\end{equation}
where $A_h = (h^{2^k} + h + 1)^{2^{n-k}(2^k+1)}$ and $B_h = h^2+h$. 
For a fixed $h \not\in Z_{k,1}$, the size of the image set of the right hand side is $\#\Image{\Gamma_k}$. Therefore \eqref{lasteq} is always satisfied for all $h \not\in Z_{k,1}$, for some $b \in \K^*$ since $\#\Image{\Gamma_k} < q-1$ if $k \ne m$ (see Proposition \ref{prop_hk}). 

\end{proof}

Next, we will count the number of $C \in \F$ for which the polynomial $P_{C,k}(X)$ has no solutions $X \in \mathcal{P}_{q-1}$. The cardinality $\#\Image{\Gamma_k}$ is given explicitly in Proposition \ref{prop_hk}.

\begin{theorem}\label{thm_count}
The number of elements $C \in \F$ for which the polynomial $P_{C,k}(X)$ has no solutions $X \in \mathcal{P}_{q-1}$ is 
\begin{align*}
N_{m,k} =	& (q-\Gcd{2^k+1}{q+1}+1)(q-\#\Image{\Gamma_k}-1) \\
		  & + \frac{q+1}{\Gcd{2^k-1}{q+1}} -\Gcd{2^k+1}{q+1}.
\end{align*}
In particular, if $\Gcd{k}{m} = 1$ (i.e., $g_{C,k}$ is APN), then 
\[
N_{m,k} = \left\{\begin{array}{rl}
(q-2) \frac{q+1}{3} & \textrm{if $m$ is odd},\\
q \frac{q-1}{3} & \textrm{if $m$ is even}.
\end{array}\right.
\]
\end{theorem}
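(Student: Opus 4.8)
The plan is to count $C \in \F$ for which $P_{C,k}$ has no solutions in $\mathcal{P}_{q-1}$ by taking the complement of the characterization in Theorem \ref{thm_hex}. Recall that $P_{C,k}$ has a solution in $\mathcal{P}_{q-1}$ precisely when all three conditions of Theorem \ref{thm_hex} hold; hence $P_{C,k}$ has \emph{no} solution exactly when at least one of those conditions fails. First I would separate the count into two disjoint contributions according to whether $C \in \K$ or $C \not\in \K$. If $C \in \K$, then $P_{C,k}(1) = 0$, so every such $C$ contributes a solution and must be excluded. Therefore I restrict to $C \not\in \K$, which by the Trace-$0$/Trace-$1$ decomposition means writing $C^q + 1 = bh$ with $b \in \K^*$ and $h \in \mathcal{T}_1$, giving a bijection $C \leftrightarrow (b,h) \in \K^* \times \mathcal{T}_1$.

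Next I would split the $C \not\in \K$ case according to whether $h \in Z_{k,1}$ or $h \not\in Z_{k,1}$. For $h \in Z_{k,1}$, the proof of Theorem \ref{thm_hex} shows that $P_{C,k}$ \emph{always} has a solution regardless of $b$, so these $C$ are all excluded; there are $\#Z_{k,1} = \Gcd{2^k+1}{q+1} - 1$ such values of $h$ by Lemma \ref{tracelemma} (v), each paired with any of the $q-1$ values of $b$. For $h \not\in Z_{k,1}$, the third condition of Theorem \ref{thm_hex} says $P_{C,k}$ has no solution in $\mathcal{P}_{q-1}$ if and only if $\frac{1}{b} \not\in \Image{A_h(y^{2^k+1}+y) + B_h}$ (the set in Eq.~\eqref{eqwithinv}), an affine image of $\Image{\Gamma_k}$ of the same cardinality $\#\Image{\Gamma_k}$. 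For each admissible $h$ the number of forbidden values of $\frac{1}{b}$ is $\#\Image{\Gamma_k}$, but one must track whether $0$ lies in this affine image, since $\frac{1}{b}$ ranges over $\K^*$ rather than $\K$; this bookkeeping is exactly what produces the $C_{k,0}$ set of Lemma \ref{techlemma} (i), whose cardinality $\frac{q+1}{\Gcd{2^k-1}{q+1}} - 1$ accounts for the correction term in $N_{m,k}$.

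The main obstacle I anticipate is this correction term. Naively one expects, for each of the $q - \#Z_{k,1} = q - \Gcd{2^k+1}{q+1} + 1$ values $h \not\in Z_{k,1}$, exactly $q - 1 - \#\Image{\Gamma_k}$ or $q - \#\Image{\Gamma_k}$ good values of $b$, depending on whether $0 \in \{A_h(y^{2^k+1}+y)+B_h\}$. The delicate point is to count for how many $h$ the value $B_h = h^2 + h$ lies in the shifted image $A_h \cdot \Image{\Gamma_k}$, equivalently when $\frac{1}{b} = 0$ would be forbidden; this is where Lemma \ref{techlemma} (i) is designed to be invoked, identifying exactly the set $C_{k,0}$ of relevant $h$ and its size. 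I would assemble the total as the leading product $(q-\Gcd{2^k+1}{q+1}+1)(q-\#\Image{\Gamma_k}-1)$ coming from the bulk count over $(h,b)$ with $h \not\in Z_{k,1}$, and then add the correction $\frac{q+1}{\Gcd{2^k-1}{q+1}} - \Gcd{2^k+1}{q+1}$ arising from the $h$ for which an extra value of $b$ is permitted.

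Finally, for the special case $\Gcd{k}{m} = 1$, I would simply substitute the known values: by Lemma \ref{tracelemma} (iv) and (iii), $\Gcd{2^k+1}{q+1}$ equals $3$ when $m+k$ is even and $1$ when $m+k$ is odd, while $\Gcd{2^k-1}{q+1}$ takes the complementary values $1$ and $3$; and Proposition \ref{prop_hk} gives $\#\Image{\Gamma_k}$ explicitly with $\Gcd{k}{m}=1$. Substituting these, together with the parity analysis of $m$, into the general formula and simplifying should yield $(q-2)\frac{q+1}{3}$ when $m$ is odd and $q\frac{q-1}{3}$ when $m$ is even. This last step is routine arithmetic once the general $N_{m,k}$ is established.
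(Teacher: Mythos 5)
Your proposal follows essentially the same route as the paper: exclude $C \in \K$ (where $X=1$ is always a root), parametrize $C^q+1=bh$, discard the $\Gcd{2^k+1}{q+1}-1$ values $h \in Z_{k,1}$ for which no $b$ works, count $q-1-\#\Image{\Gamma_k}$ admissible $b$ for each remaining $h$, and grant one extra $b$ precisely for those $h$ with $0$ in the affine image $A_h\,\Image{\Gamma_k}+B_h$ --- a set the paper identifies, after the substitution $x=h+g$, as $C_{k,0}\setminus Z_{k,1}$ via Lemma \ref{techlemma}(i), exactly as you anticipate. The one point to fix is in the final specialization: when $\Gcd{k}{m}=1$ the two gcds are not always ``complementary''. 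If $m$ is even (forcing $k$ odd), then $\Gcd{2^k+1}{q+1}=\Gcd{2^k-1}{q+1}=1$ simultaneously --- Lemma \ref{tracelemma}(iii) only guarantees that at least one of them equals $1$ --- and substituting $3$ for $\Gcd{2^k-1}{q+1}$ there would change the correction term from $q$ to $\frac{q+1}{3}-1$ and spoil the count $q\frac{q-1}{3}$.
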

\begin{proof}
Recall that $P_{C,k}(1)=0$ if and only if $C \in \K$, therefore we will assume $C = bh$ where $b \in \K^*$ and $h \in \mathcal{T}_1$. Recall the inequality \eqref{eqwithinv}
\[
\frac{1}{b} \ne A_h (x^{2^k+1} + x) + B_h,
\]
and note that the cardinality of the image set of the right hand side is the same as $\#\Image{\Gamma_k}$ if and only if $A_h \ne 0$. Also, if $h^{2^k}+h=1$ (i.e., $A_h = 0$), then there exists no $b$ satisfying \eqref{eqwithinv} (see the proof of Theorem \ref{thm_hex}). For all $h \in \mathcal{T}_1$ such that $h^{2^k}+h \ne 1$ (i.e. $h \not\in Z_{k,1}$ of Lemma \ref{tracelemma} (v), where the number of such $h$ is given explicitly), there are at least $q-\#\Image{\Gamma_k}-1$ elements $b \in \K^*$ satisfying \eqref{eqwithinv} for some $y \in \K$, since $b$ covers all nonzero elements. Therefore the number of $C = bh$ satisfying \eqref{eqwithinv} for some $x \in \K$ is
\[
N_{m,k} = \sum_{h \in \mathcal{T}_1 \setminus Z_{k,1}} \#n_h
\]
where $n_h = \{ b \in \K^* \ : \ A_h (x^{2^k+1} + x) + B_h \ne \frac{1}{b}, \ \forall x \in \K \}$. Note that  
\[
n_h = \left\{ \begin{array}{ll}
q - \Image{\Gamma_k}  & \textrm {if } A_h (x^{2^k+1} + x) + B_h = 0 \textrm{ for some } x \in \K,\\
q - \Image{\Gamma_k} - 1 & \textrm {if } A_h (x^{2^k+1} + x) + B_h \ne 0 \textrm{ for all } x \in \K.
\end{array}
\right.
\]
Therefore we need to count for how many $h \in \mathcal{T}_1 \setminus Z_{k,1}$ there exists $x \in \K$ such that $A_h (x^{2^k+1} + x) + B_h =0,$ or equivalently
\begin{equation}
x^{2^k+1}+x (h^{2^k}+h+1)+h^2+h = 0. \label{zeroeq}
\end{equation}

We have $h^{2^k+1}+ h (h^{2^k}+h+1) + h^2+h = 0$ (note that this does not provide a solution as $h\not\in\K$). And $x = h+g \in \K$ is a solution of the Eq. \eqref{zeroeq} if and only if
\begin{align*}
0 & = (h+g)^{2^k+1}+ (h+g) (h^{2^k}+h+1) + h^2+h  \\
	& = g^{2^k+1} + g h^{2^k} + g^{2^k}h + g (h^{2^k}+h+1) + h^{2^k+1}+ h (h^{2^k}+h+1) + h^2+h\\
	& = g^{2^k+1} + g h^{2^k} + g^{2^k}h + g (h^{2^k}+h+1) \\
	& = g^{2^k+1} + g + h ( g^{2^k} + g).
\end{align*}
Since $g^{2^k}+g = 0$ does not lead to a solution, the last line gives $h = \frac{g^{2^k+1}+g}{g^{2^k} + g}$. The number of $h \in \mathcal{T}_1$ which satisfies the condition $h^{2^k}+h \ne 1$ and for which such a $g \in \mathcal{T}_1$ exists is $\frac{q+1}{\Gcd{2^k-1}{q+1}} -\Gcd{2^k+1}{q+1}$. Indeed, the number of $h \in \mathcal{T}_1$ such that $h = \frac{g^{2^k+1}+g}{g^{2^k} + g}$ for some $g \in \mathcal{T}_1$ is $\frac{q+1}{\Gcd{2^k-1}{q+1}} - 1$ by Lemma \ref{techlemma} (i). From this number, we need to subtract $\Gcd{2^k+1}{q+1} - 1$ elements $h$ which satisfy $h^{2^k}+h = 1$. Since, if $h^{2^k}+h = 1$ then by Lemma \ref{tracelemma} (v) and (iii), $\Gcd{2^k-1}{q+1} = 1$ and by Lemma \ref{techlemma} (i), the set $C_{k,0} = \mathcal{T}_1$, which means that there exists a $g$ satisfying $h = \frac{g^{2^k+1}+g}{g^{2^k} + g}$ (if there are no $h$ such that $h^{2^k}+h = 1$ then $\Gcd{2^k+1}{q+1} - 1 = 0$ and we had subtracted nothing).

The number of $C \in \F$ such that $P_{C,k}(X)$ has no solutions $X \in \mathcal{P}_{q-1}$ is therefore
\begin{align*}
N_{m,k} =	& (q-\Gcd{2^k+1}{q+1}+1)(q-\#\Image{\Gamma_k}-1) \\
		  & + \frac{q+1}{\Gcd{2^k-1}{q+1}} -\Gcd{2^k+1}{q+1}.
\end{align*}
Employing the cardinality of $\Image{\Gamma_k}$ given in Proposition \ref{prop_hk}, we get the number of Budaghyan-Carlet APN functions for a given $(m,k)$ pair.
\end{proof}

\begin{remark}
We would like to remark that the method to list all $C \in \F$ such that $P_{C,k}(X)$ has no solutions $X \in \mathcal{P}_{q-1}$ our Theorem \ref{thm_hex} provides is very efficient: First, determine $\K \setminus \Image{\Gamma_k}$ either directly or using the methods of Bracken, Tan and Tan \cite[Theorem 2.1]{BTT} or Helleseth and Kholosha \cite{HK1,HK2} (requires time polynomial in $q$ which is the square-root of the field size) and then for each $h \in \mathcal{T}_1$ solve for $b$ (requires time polynomial in $q$). Therefore one needs $\mathcal{O}(q^2)$ field operations to list $\mathcal{O}(q^2)$ elements. 

\begin{remark}
Budaghyan and Carlet \cite{BC} notes that the (experimental) number of $C$ such that $g_{C,1}$ is irreducible is roughly $\frac{3}{10} q^2$, whereas our theorem shows the number of $C$ such that $g_{C,k}$ does not have solutions in $\mathcal{P}_{q-1}$ is roughly $\frac{1}{3} q^2$. This means that most of such $C$ leads to irreducible polynomials.
\end{remark}
\end{remark}

\begin{table}[!h]
\noindent\begin{center} 
{\footnotesize
\begin{tabular}{|c|c|c|c|c|c|c|c|} 
\hline 
 $m \backslash k$ & $1$ & $2$ & $3$ & $4$ & $5$ & $6$ & $7$ \\ 
\hline 
$1$ & $0$ &&&&&&\\
\hline
$2$ & $4$ & $0$ &&&&&\\
\hline 
$3$ & $18$ & $18$ & $0$ &&&&\\
\hline 
$4$ & $80$ & $96$ & $80$ & $0$ &&&\\
\hline
$5$ & $330$ & $330$ & $330$ & $330$ & $0$ &&\\
\hline
$6$ & $1344$ & $1560$ & $1792$ & $1612$ & $1344$ & $0$ &\\
\hline
$7$ & $5418$ & $5418$ & $5418$ & $5418$ & $5418$ & $5418$ & $0$ \\
\hline
\end{tabular} 
}
\end{center}
\caption{Some values of $N_{m,k}$}
\end{table}

\section{Conclusion and open problems}

On the search for APN permutations on even dimensions larger than $n=6$, Browning {\em et al.} remarked \cite{Dillon2} 
\begin{quote}
[T]he highly structured decomposition of the $\kappa$ code raise the hope that much of the structure, if not all, should generalize to higher dimensions. Does it?
\end{quote}

We have presented a new infinite family of simply defined APN functions which satisfies the important ``subspace property'' of the Kim function $\kappa$. Unfortunately, the family we presented does not seem to be equivalent to permutations. We suspect the reason for that is that our family of functions reduces to Gold functions (just like the Kim function) on the subfield $\f{2^m}$, which are {\bf not} permutations for our case since $m$ is even (unlike the Kim function, since $m = 3$ it does reduce to a permutation). Therefore the following is an interesting problem.

\begin{problem}
Find an infinite family of APN functions which includes the Kim function and which satisfies the subspace property. 
\end{problem}
We remark that the families B.4, B.5 and B.6 found by Bracken {\em et al.} \cite{B3,B4} includes the Kim function as a special case. But, no other member of these families directly satisfies the subspace property (it may be the case ---albeit unlikely--- that a family in Table \ref{APNmulti} includes a CCZ-equivalent function which satisfies the property, but according to some computer experiments this does not seem to be the case for small $n$).  


We do not know any categorical reasons why the existing families cannot be equivalent to permutations. A way to attack this problem is to show that the zeroes of Walsh spectrum cannot contain two trivially intersecting subspaces of dimension $n$. 

\begin{problem}
Show that the Gold functions (or any existing family) are not equivalent to permutations.
\end{problem}

Since the Walsh zeroes of the most of the known families are difficult to describe, this problem may be even more difficult to resolve for the families other than the Gold functions for which the description of Walsh zeroes is well-known. For this reason the following problem makes sense.

\begin{problem}
Describe the zeroes of the Walsh transform of known APN families. 
\end{problem}

All of these problems are posed to help solving the following difficult ``big APN problem.''
\begin{problem}
Are there APN permutations on $\f{2^{2m}}$ for $m > 3$?
\end{problem}

\section*{Acknowledgments}
The author would like to thank Gary McGuire for many helpful comments.

\bibliographystyle{acm}

\bigskip
\hrule
\bigskip

\end{document}